\newtheorem{thm}{Theorem}[section]
\newtheorem{cor}[thm]{Corollary}
\newtheorem{lem}[thm]{Lemma}
\theoremstyle{definition}
\newtheorem{defn}[thm]{Definition}
\newtheorem{rem}[thm]{Remark}
\newtheorem*{rem*}{Remark}
\newtheorem{ex}[thm]{Example}
\titleformat{\section}{\normalfont\bfseries\centering}{\thesection.}{.25em}{}
\titleformat{\subsection}{\normalfont\bfseries}{\thesubsection.}{.25em}{}
\titleformat{\subsubsection}{\normalfont\it}{\thesubsubsection.}{.25em}{}
\titlespacing{\section}{0pt}{*5}{*1.5}
\numberwithin{equation}{section}
\renewcommand{\emptyset}{\varnothing}
\newcommand{\braces}[1]{{\rm (}#1{\rm )}}
\newcommand{\rmref}[1]{{\rm\ref{#1}}}
\newcommand{\R}{\ensuremath{\mathbb R}}    
\newcommand{\N}{\ensuremath{\mathbb N}}    
\renewcommand{\C}{\ensuremath{\mathbb C}}    
\newcommand{\product}{[\cdot\,,\cdot]}
\newcommand{\hproduct}{(\cdot\,,\cdot)}
\renewcommand{\>}{\rangle}
\newcommand{\aproduct}{\langle\cdot\,,\cdot\rangle}
\newcommand{\calF}{\mathcal F}         
\newcommand{\calH}{\mathcal H}
\newcommand{\calK}{\mathcal K}         
\newcommand{\calL}{\mathcal L}
\newcommand{\la}{\lambda}
\newcommand{\veps}{\varepsilon}
\newcommand{\vphi}{\varphi}
\newcommand{\vrho}{\varrho}
\newcommand{\mat}[4]
{
   \begin{pmatrix}
      #1 & #2\\
      #3 & #4
   \end{pmatrix}
}
\newcommand{\vek}[2]
{
   \begin{pmatrix}
      #1\\
      #2
   \end{pmatrix}
}
\renewcommand{\Im}{\operatorname{Im}}
\renewcommand{\Re}{\operatorname{Re}}
\newcommand{\dom}{\operatorname{dom}}
\newcommand{\ran}{\operatorname{ran}}
\newcommand{\sap}{\sigma_{{ap}}}
\renewcommand{\sp}{\sigma_{+}}
\newcommand{\sm}{\sigma_{-}}
\newcommand{\Slra}{\Leftrightarrow}
\newcommand{\downto}{\downarrow}
\newcommand{\ol}{\overline}
\newcommand{\wh}{\widehat}
\newcommand{\dist}{\operatorname{dist}}
\newcommand{\sgn}{\operatorname{sgn}}
\newcommand{\AC}{\operatorname{AC}}
\DeclareMathOperator*{\slim}{s-lim}
\begin{document}
\title[]{Relatively bounded perturbations of J-non-negative operators}

\author[F. Philipp]{Friedrich Philipp}
\address{{\bf F.~Philipp:} Technische Universit\"at Ilmemau, Institute of Mathematics, Germany}
\email{friedrich.philipp@tu-ilmenau.de}
\urladdr{www.tu-ilmenau.de/obc/team/friedrich-philipp}


\begin{abstract}
We improve known perturbation results for self-adjoint operators in Hilbert spaces and prove spectral enclosures for diagonally dominant $J$-self-adjoint operator matrices. These are used in the proof of the central result, a perturbation theorem for $J$-non-negative operators. The results are applied to singular indefinite Sturm-Liouville operators with $L^p$-potentials. Known bounds on the non-real eigenvalues of such operators are improved.
\end{abstract}

\subjclass[2010]{Primary 47B50, 47A55; Secondary 47E05, 34L15}

\keywords{$J$-self-adjoint operator, $J$-non-negative operator, relatively bounded perturbation, spectral enclosure}

\maketitle
\thispagestyle{empty}

\vspace*{-.5cm}
\section{Introduction}
$J$-self-adjoint operators are special generally non-self-adjoint operators in Hilbert spaces; they appear in various applications in mathematics and mathematical phy\-sics such as the Klein-Gordon equation \cite{g,ggh,j93,lnt06,lnt08} and other types of wave equations \cite{ek,jttv,kl1,kl2}, PT-symmetry in quantum mechanics \cite{bb,ks,lt,lt2,ta} \& \cite[Sec.\ 7]{amt}, and self-adjoint analytic operator functions \cite{llmt,lmm00,lmm06,lmm09}, just to name a few.

They also occur naturally as realizations of indefinite Sturm-Liouville expressions \cite{bp,bpt,cl,cn,kmwz,p,z}. As a motivation, let us consider such an indefinite Sturm-Liouville operator:
\begin{equation}\label{e:SLA1}
A : f\mapsto\sgn\cdot(-f'' + qf)
\end{equation}
with a real-valued potential $q\in L^p(\R)$, $p\ge 1$, defined on the maximal domain
\[
\dom A = \{f\in L^2(\R) : f,f'\in\AC_{\rm loc}(\R),\,-f'' + qf\in L^2(\R)\}.
\]
It is well known that the non-real spectrum of $A$ is bounded and consists of isolated eigenvalues, see, e.g., \cite{bp,bst}, and it is of particular interest to find bounds on these eigenvalues.

The first result in this direction has been established in \cite{bpt} for $q\in L^\infty(\R)$. Later, it was refined and immensely generalized to unbounded potentials and a large class of weights in \cite{bst}. Recently, the bound on the imaginary part from \cite{bst} could be further improved in \cite{ci}. The methods in \cite{bst} and \cite{ci}, however, differ significantly from those used in \cite{bpt}. In fact, while the authors in \cite{bst,ci} work explicitly with the differential expressions and operators, the result in \cite{bpt} follows from an abstract theorem on bounded perturbations of $J$-non-negative operators. The idea is simple: we write $A = A_0 + V$, where
\begin{equation}\label{e:SLA_0}
A_0 : f\mapsto -\sgn\cdot f'',\qquad f\in\dom A_0 = H^2(\R),
\end{equation}
and $V$ is the operator of multiplication with $\sgn\cdot q$. If $J$ further denotes multiplication with $\sgn$, then $J = J^* = J^{-1}$ and $A_0$ is $J$-non-negative, i.e., $JA_0$ is self-adjoint and non-negative. Hence, the abstract theorem can be applied.

Here, we proceed in a similar way as in \cite{bpt}: we first prove an abstract theorem on relatively bounded perturbations of $J$-non-negative operators and apply it to the above situation. In this case, we require $q\in L^p(\R)$ with $p\in [2,\infty]$ since these potentials lead to relatively bounded perturbations of $A_0$. As a consequence, we obtain bounds on the non-real eigenvalues of $A$ (see Theorem \ref{t:SL}); these are of the same flavor as those in \cite{bst}, but surprisingly improve them significantly in the case $q\le 0$ (see Figure \ref{f:comp_bst}).


Let us now touch upon the abstract situation in more detail. For this, let $J\neq I$ be a self-adjoint involution on a Hilbert space $(\calH,\hproduct)$, i.e., $J = J^* = J^{-1}$. 
A linear operator $A$ in $\calH$ is called $J$-self-adjoint if the composition $JA$ is a self-adjoint operator in $\calH$. 
In contrast to self-adjoint operators, the spectrum of a $J$-self-adjoint operator is not necessarily real. The only restriction, in general, is its symmetry with respect to the real axis and it is not hard to construct $J$-self-adjoint operators whose spectrum covers the entire complex plane (see, e.g., Example \ref{ex:ex}) or is empty. It is therefore reasonable to consider special classes of $J$-self-adjoint operators---e.g., (locally) definitizable operators \cite{j03,l82}, or, more specific, $J$-non-negative operators. A $J$-self-adjoint operator $A$ with non-empty resolvent set is called $J$-non-negative if $JA\ge 0$. It is well known \cite{l82} that a $J$-non-negative operator has real spectrum and possesses a spectral function on $\ol\R$ with the possible singularities $0$ and $\infty$. 
%
%

Several types of perturbations of $J$-non-negative operators have already been investigated in numerous works. Here, we only mention \cite{abpt,bj05,j88,jl83} for compact and finite-rank perturbations, \cite{ams,amt,bpt,j75} for bounded perturbations and \cite{j98} for form-bounded perturbations. The main result in \cite{bpt} states that a bounded perturbation $A_0+V$ of a $J$-non-negative operator $A_0$ which satisfies a certain regularity condition can be spectrally decomposed into a bounded operator and a $J$-non-negative operator. In particular, the non-real spectrum of the perturbed operator is contained in a compact set $K$ and bounds on $K$ were given explicitly. We call such operators $J$-non-negative over $\ol\C\setminus K$ (cf.\ Definition \ref{d:nnoK}).

Here, our focus lies on {\em relatively bounded} perturbations of the form $A_0 + V$, where $A_0$ is $J$-non-negative and $V$ is both $J$-symmetric and $A_0$-bounded. Our main result, Theorem \ref{t:main}, generalizes that of \cite{bpt} and shows that whenever the $A_0$-bound of $V$ is sufficiently small, then the operator $A_0+V$ is $J$-non-negative over $\ol\C\setminus K$ with a compact set $K$ which is specified in terms of the relative bounds of $V$.

%
A brief outline of the paper is as follows. First of all, we provide the necessary notions and definitions in Section \ref{s:prelims}. In Section \ref{s:ct} we consider a self-adjoint operator $S$ in a Hilbert space and an $S$-bounded operator $T$ with $S$-bound less than one and provide enclosures for the set
\[
K_S(T) := \{\la\in\vrho(S) : \|T(S-\la)^{-1}\|\ge 1\}.
\]
As a by-product, this result yields an improvement of spectral enclosures from \cite{ct} for the operator $S+T$ (cf.\ Corollary \ref{c:improvement} and Remark \ref{r:improvement}). However, the main reason for considering the set $K_S(T)$ is Theorem \ref{t:new} in Section \ref{s:matrix} which states in particular that the non-real spectrum of $J$-self-adjoint diagonally dominant block operator matrices of the form
\begin{equation}\label{e:mmatrix}
S = \mat{S_+}M{-M^*}{S_-}
\end{equation}
is contained in $K_{S_-}(M)\cap K_{S_+}(M^*)$ if both the $S_-$-bound of $M$ and the $S_+$-bound of $M^*$ are less than one. It is now a key observation that our main object of investigation -- the operator $A_0+V$ -- can be written in the form \ref{e:mmatrix}. A renormalization procedure then allows us to derive the main result, Theorem \ref{t:main}. In Section \ref{s:appl} we apply Theorem \ref{t:main} to singular indefinite Sturm-Liouville operators.

\medskip\noindent
{\bf Notation:} Throughout, $\calH$ and $\calK$ stand for Hilbert spaces. In this paper, whenever we write $T : \calH\to\calK$, we mean that $T$ is a linear operator mapping from $\dom T\subset\calH$ to $\calK$, where $\dom T$ does not necessarily coincide with $\calH$ nor do we assume that it is dense in $\calH$. The space of all bounded operators $T : \calH\to\calK$ with $\dom T = \calH$ is denoted by $L(\calH,\calK)$. As usual, we set $L(\calH) := L(\calH,\calH)$. The spectrum of an operator $T$ in $\calH$ is denoted by $\sigma(T)$, its resolvent set by $\vrho(T) =\C\setminus\sigma(T)$. The set of eigenvalues of $T$ is called the {\em point spectrum} of $T$ and is denoted by $\sigma_p(T)$. The {\em approximate point spectrum} $\sap(T)$ of $T$ is the set of all $\la\in\C$ for which there exists a sequence $(f_n)\subset\dom T$ such that $\|f_n\|=1$ for all $n\in\N$ and $(T-\la)f_n\to 0$ as $n\to\infty$. Throughout, for a set $\Delta\subset\C$ and $r > 0$ we use the notation $B_r(\Delta) := \{z\in\C : \dist(z,\Delta)\le r\}$. For $z_0\in\C$ we also write $B_r(z_0) := B_r(\{z_0\}) = \{z\in\C : |z-z_0|\le r\}$. These sets are intentionally defined to be closed.

\section{Preliminaries}\label{s:prelims}
Let $J\in L(\calH)$ be a self-adjoint involution, i.e., $J = J^* = J^{-1}$. The operator $J$ induces an (in general indefinite) inner product $\product$ on $\calH$:
\[
[f,g] = (Jf,g),\qquad f,g\in\calH.
\]
An operator $A$ in $\calH$ is called {\em $J$-self-adjoint} ({\em $J$-symmetric}) if $JA$ is self-adjoint (symmetric, resp.). Equivalently, $A$ is self-adjoint (resp.\ symmetric) with respect to the inner product $\product$. The spectrum of a $J$-self-adjoint operator is symmetric with respect to $\R$, that is, $\la\in\sigma(A)\;\Slra\;\ol\la\in\sigma(A)$. However, more cannot be said, in general. It is easy to construct examples of $J$-self-adjoint operators whose spectrum is the entire complex plane (see, e.g., Example \ref{ex:ex} below). Therefore, the literature usually focusses on special classes of $J$-self-adjoint operators or on local spectral properties as the spectral points of positive and negative type which we shall explain next.

Let $A$ be a $J$-self-adjoint operator. The subset $\sp(A)$ ($\sm(A)$) of $\sigma(A)$ consists of the points $\la\in\sap(A)$ for which each sequence $(f_n)\subset\dom A$ with $\|f_n\|=1$ for all $n\in\N$ and $(A-\la)f_n\to 0$ as $n\to\infty$ satisfies
\[
\liminf_{n\to\infty}\,[f_n,f_n] > 0\qquad\Big(\limsup_{n\to\infty}\,[f_n,f_n] < 0,\text{ resp.}\Big).
\]
A point $\la\in\sp(A)$ ($\sm(A)$) is called a {\em spectral point of positive \braces{negative} type} of $A$ and a set $\Delta\subset\C$ is said to be of positive (negative) type with respect to $A$ if $\Delta\cap\sigma(A)\subset\sp(A)$ ($\Delta\cap\sigma(A)\subset\sm(A)$, resp.). The notion of the spectral points of positive and negative type was introduced in \cite{lmm} (see also \cite{lamm}). It is immediate that $\la\in\sp(A)$ implies that for each $f\in\ker(A-\la)$, $f\neq 0$, we have $[f,f] > 0$. Hence, $\ker(A-\la)$ is a $J$-positive subspace. In fact, much more holds (see \cite{lmm}): We have $\sp(A)\subset\R$ and, if $\la\in\sp(A)$, then
\begin{enumerate}
	\item[(1)] there exists an open (complex) neighborhood $U$ of $\la$ such that each point in $U$ is either contained in $\vrho(A)$ or in $\sp(A)$. In particular, $U\setminus\R\subset\vrho(A)$;\\[-.3cm]
	\item[(2)] there exists a local spectral function\footnote{For a definition of this notion we refer to, e.g., \cite[Definition 2.2]{p2}.} $E$ on $U\cap\R$ such that for each Borel set $\Delta$ with $\ol\Delta\subset U\cap\R$ the projection $E(\Delta)$ is $J$-self-adjoint and $(E(\Delta)\calH,\product)$ is a Hilbert space.
\end{enumerate}
Roughly speaking, the part of the operator $A$ with spectrum in $U\cap\R$ is a self-adjoint operator in a Hilbert space. Similar statements hold for the spectral points of negative type.

\begin{defn}
A $J$-self-adjoint operator $A$ in $\calH$ is said to be {\em $J$-non-negative} if $\vrho(A)\neq\emptyset$ and $\sigma(JA)\subset [0,\infty)$, that is, $(JAf,f)\ge 0$ for $f\in\dom A$ (equivalently, $[Af,f]\ge 0$ for $f\in\dom A$). The operator $A$ is said to be {\em uniformly $J$-positive} if it is $J$-non-negative and $0\in\rho(A)$, i.e., $\sigma(JA)\subset (0,\infty)$.
\end{defn}

It is well known that the spectrum of a $J$-non-negative operator is real and that $(0,\infty)\cap\sigma(A)\subset\sp(A)$ as well as $(-\infty,0)\cap\sigma(A)\subset\sm(A)$, see, e.g., \cite{l82}. Consequently, $A$ possesses a spectral function $E$ on $\ol\R$ with the possible singularities $0$ and $\infty$. The projection $E(\Delta)$ is then defined for all Borel sets $\Delta\subset\ol\R$ for which $0\notin\partial\Delta$ and $\infty\notin\partial\Delta$. The points $0$ and $\infty$ are called the {\em critical points} of $A$. If both $\|E([\veps,1])\|$ and $\|E([-1,-\veps])\|$ are bounded as $\veps\downto 0$, the point $0$ is said to be a {\em regular} critical point of $A$. In this case, the spectral projection $E(\Delta)$ also exists if $0\in\partial\Delta$. A similar statement holds for the point $\infty$. In the sequel, we agree on calling a $J$-non-negative operator {\em regular} if its critical points $0$ and $\infty$ both are regular.

As was shown in \cite{bpt}, the perturbation of a regular $J$-non-negative operator $A_0$ with a bounded $J$-self-adjoint operator $V$ leads to a $J$-self-adjoint operator $A = A_0 + V$ whose non-real spectrum is bounded and for which there exist $r_\pm > 0$ such that $(r_+,\infty)$ is of positive type and $(-\infty,-r_-)$ is of negative type with respect to $A$. Hence, the perturbed operator exhibits the same good spectral properties as a $J$-non-negative operator in the exterior of a compact set. We call such an operator $J$-non-negative in a neighborhood of $\infty$. The following definition makes this more precise. Here, for a set $\Delta\subset\C$ we define $\Delta^* := \{\ol\la : \la\in\Delta\}$. By $\C^+$ we denote the open upper complex half-plane. We also set $\R^+ := (0,\infty)$ and $\R^- := (-\infty,0)$.

\begin{defn}\label{d:nnoK}
Let $K = K^*\subset\C$ be a compact set, $0\in K$, such that $\C^+\setminus K$ is simply connected. A $J$-self-adjoint operator $A$ in $\calH$ is said to be {\em $J$-non-negative over $\ol\C\setminus K$} if the following conditions are satisfied:
\begin{enumerate}
\item[\rm (i)]   $\sigma(A)\setminus\R\subset K$.
\item[\rm (ii)]  $(\sigma(A)\cap\R^\pm)\setminus K\subset\sigma_\pm(A)$.
\item[\rm (iii)] There exist $M > 0$ and a compact set $K'\supset K$ such that for $\la\in\C\setminus(\R\cup K')$ we have
\begin{equation}\label{e:growth}
\|(A-\la)^{-1}\|\,\le\,M\,\frac{(1+|\la|)^2}{|\Im\la|^2}.
\end{equation}
\end{enumerate}
\end{defn}

The relation \eqref{e:growth} means that the growth of the resolvent of $A$ at $\infty$ is of order at most $2$. The order is $1$ if the fraction in \eqref{e:growth} can be replaced by $|\Im\la|^{-1}$.

\begin{rem}\label{r:J-indep}
Note that the notion of $J$-non-negativity over $\ol\C\setminus K$ does not depend on $J$ explicitly, but only on the inner product $\product$. That is, if $\hproduct_0$ is an equivalent Hilbert space scalar product on $\calH$ such that $\product = (J_0\cdot,\cdot)_0$ for some $\hproduct_0$-self-adjoint involution $J_0$, then an operator $A$ in $\calH$ is $J$-non-negative over $\ol\C\setminus K$ in $(\calH,\hproduct)$ if and only if it is $J_0$-non-negative over $\ol\C\setminus K$ in $(\calH,\hproduct_0)$.
\end{rem}

Due to (ii) a $J$-self-adjoint operator $A$ that is $J$-non-negative over $\ol\C\setminus K$ possesses a (local) spectral function $E$ on $\ol\R\setminus K$ with a possible singularity at $\infty$. We say that $A$ is {\em regular at $\infty$} if $\infty$ is not a singularity of $E$. By \cite[Thm.\ 2.6 and Prop.\ 2.3]{bpt} this is the case if and only if there exists a uniformly $J$-positive operator $W$ in $\calH$ such that $W\dom A\subset\dom A$.

In this paper we investigate relatively bounded perturbations of regular $J$-non-negative operators. Recall that an operator $T : \calH\to\calK_1$ is called {\em relatively bounded} with respect to an operator $S : \calH\to\calK_2$ (or simply {\em $S$-bounded}) if $\dom S\subset\dom T$ and
\begin{equation}\label{e:rb}
\|Tf\|^2\,\le\,a\|f\|^2 + b\|Sf\|^2,\qquad f\in\dom S,
\end{equation}
where $a,b\ge 0$. The infimum of all possible $b$ in \eqref{e:rb} is called the {\em $S$-bound} of $T$. It is often convenient to assume that the $S$-bound of $T$ is less than one. Then, if $S$ is closed, also $S+T$ is closed (see \cite[Thm.\ IV.1.1]{k}) and if $S$ is self-adjoint and $T$ symmetric, the sum $S+T$ is self-adjoint. The latter statement is known as the Kato-Rellich theorem (see, e.g., \cite[Thm.\ V.4.3]{k}).

\section{Perturbations of self-adjoint operators}\label{s:ct}
In this section, $S$ always denotes a self-adjoint operator in a Hilbert space $\calH$ and $T : \calH\to\calK$ is an $S$-bounded operator, where $\calK$ is another Hilbert space. We note that in this situation we have
\begin{equation}\label{e:adjoint}
[T(S-\la)^{-1}]^* = \ol{(S-\ol\la)^{-1}T^*}\,\in\,L(\calK,\calH),
\end{equation}
whenever $\la\in\vrho(S)$. The following set will play a crucial role in our spectral estimates in the subsequent sections:
\[
K_S(T) := \left\{\la\in\vrho(S) : \|T(S-\la)^{-1}\|\ge 1\right\}.
\]
Although its proof is elementary it seems that the following result is new.

\begin{lem}\label{l:norm}
Let $\calH$ and $\calK$ be Hilbert spaces, let $S$ be a self-adjoint operator in $\calH$, and let $T : \calH\to\calK$ be such that
\[
\|Tf\|^2\,\le\,a\|f\|^2 + b\|Sf\|^2,\qquad f\in\dom S\subset\dom T,
\]
where $a,b\ge 0$, $b<1$. Then 
\begin{equation}\label{e:besserer}
K_S(T)\,\subset\,B_S(T) := \bigcup_{t\in\sigma(S)}B_{\sqrt{a+bt^2}}(t).
\end{equation}
For $\la\notin B_S(T)$ we have
\begin{equation}\label{e:estimate}
\|T(S-\la)^{-1}\|\,\le\,\sup_{t\in\sigma(S)}\frac{\sqrt{a+bt^2}}{|\la-t|}\,<\,1.
\end{equation}
\end{lem}
\begin{proof}
Let $f\in\dom S^2$. Then
\[
\|Tf\|^2\,\le\,(af,f) + (bS^2f,f) = \|(a+bS^2)^{1/2}f\|^2.
\]
If $\la\in\vrho(S)$, this implies
\begin{align*}
\|T(S-\la)^{-1}\|
&\le\,\|(a+bS^2)^{1/2}(S-\la)^{-1}\| = \sup_{t\in\sigma(S)}\phi_{\la}(t),
\end{align*}
where $\phi_\la(t) := \tfrac{\sqrt{a+bt^2}}{|\la-t|}$. Therefore and due to $\lim_{t\to\pm\infty}\phi_{\la}(t) = \sqrt b < 1$, for $\|T(S-\la)^{-1}\| < 1$ it is sufficient that $\phi_{\la}(t) < 1$ for each $t\in\sigma(S)$. But the latter is just equivalent to $\la$ not being contained in the right-hand side of \eqref{e:besserer}.
\end{proof}

\begin{cor}
Under the assumptions of Lemma \rmref{l:norm} we have
\begin{equation}\label{e:hull}
K_S(T)\,\subset\,\left\{\la\in\C : (\Im\la)^2\le a+\frac{b}{1-b}\,(\Re\la)^2\right\}.
\end{equation}
\end{cor}
\begin{proof}
We prove that $B_{\sqrt{a+bt^2}}(t)$ is contained in the right-hand side of \eqref{e:hull} for arbitrary $t\in\R$. For this, let $\la = \alpha + i\beta\in B_{\sqrt{a+bt^2}}(t)$, i.e., $(\alpha - t)^2 + \beta^2 < a+bt^2$, which implies
\[
\beta^2 < a + 2\alpha t - \alpha^2 - (1-b)t^2\,\le\,a+\frac b{1-b}\alpha^2.
\]
Indeed, the last inequality follows from $(\frac{\alpha}{\sqrt{1-b}} - t\sqrt{1-b})^2\ge 0$.
\end{proof}

\begin{rem}\label{r:fla}
Let $S$ and $T$ be as in Lemma \ref{l:norm} and $\la\in\C\setminus\R$. Let us consider the global behaviour of the function 
\begin{equation}\label{e:Delta}
\vphi_\la(t) := \frac{a+bt^2}{|t-\la|^2}.
\end{equation}
First of all, we always have $\vphi_\la(\pm\infty) = b$. If $b=0$, then $\vphi_\la$ has no local minima but a global maximum at $t = \Re\la$. Let $b > 0$, $\Re\la\neq 0$, and set
\begin{equation}\label{e:mla}
m_\la := \frac{b|\la|^2-a}{2b\cdot\Re\la}.
\end{equation}
Then we have $\vphi_\la(m_\la) = \vphi_\la(\pm\infty) = b$, and $\vphi_\la$ has a global maximum and a global minimum at
\[
t_{\max} = m_\la + \sgn(\Re\la)\sqrt{\frac ab + m_\la^2}
\qquad\text{and}\qquad
t_{\min} = m_\la - \sgn(\Re\la)\sqrt{\frac ab + m_\la^2},
\]
respectively. Let $b > 0$, $\Re\la = 0$. If $(\Im\la)^2 > \tfrac ab$, then $\vphi_\la$ has no local maxima but a global minimum at zero. If $(\Im\la)^2 < \tfrac ab$, then $\vphi_\la$ has no local minima but a global maximum at zero. In case $(\Im\la)^2 = \tfrac ab$, we have $\vphi_\la\equiv b$.
\end{rem}

\begin{cor}\label{c:smallerb}
Let $\calH$ and $\calK$ be Hilbert spaces, let $S$ be a self-adjoint operator in $\calH$ which is bounded from above by $\gamma\in\R$, and let $T : \calH\to\calK$ be such that
\[
\|Tf\|^2\,\le\,a\|f\|^2 + b\|Sf\|^2,\qquad f\in\dom S\subset\dom T,
\]
where $a\ge 0$, $b\in (0,1)$. Then
\begin{equation}\label{e:ba}
K_S(T)\,\subset\,\bigcup_{t\le\gamma}\,B_{\sqrt{a+bt^2}}(t).
\end{equation}
If $|\la| > \gamma + \sqrt{\gamma^2+\tfrac ab}$ and $\Re\la\ge 0$, then
\[
\|T(S-\la)^{-1}\|^2\,\le\,b.
\]
\end{cor}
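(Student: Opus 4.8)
The inclusion \eqref{e:ba} is immediate from Lemma \ref{l:norm}: since $S$ is bounded from above by $\gamma$ we have $\sigma(S)\subset(-\infty,\gamma]$, and restricting the union in \eqref{e:besserer} to $t\le\gamma$ gives \eqref{e:ba}. For the norm bound the plan is to start, exactly as in the proof of Lemma \ref{l:norm}, from $\|Tf\|^2\le a\|f\|^2+b\|Sf\|^2=\|(a+bS^2)^{1/2}f\|^2$ on $\dom S$. Setting $f=(S-\la)^{-1}g$ for $g\in\calH$ (legitimate whenever $\la\in\vrho(S)$) and passing to operator norms yields
\begin{equation*}
\|T(S-\la)^{-1}\|^2\,\le\,\|(a+bS^2)^{1/2}(S-\la)^{-1}\|^2=\sup_{t\in\sigma(S)}\vphi_\la(t),
\end{equation*}
with $\vphi_\la(t)=(a+bt^2)/|t-\la|^2$ as in \eqref{e:Delta}, the last equality being the functional calculus for the self-adjoint $S$. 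Since $\sigma(S)\subset(-\infty,\gamma]$, it therefore suffices to establish the pointwise bound $\vphi_\la(t)\le b$ for every $t\le\gamma$.

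The core is then a scalar estimate. Using $|t-\la|^2=t^2-2t\,\Re\la+|\la|^2$, the inequality $\vphi_\la(t)\le b$ is equivalent to $a+bt^2\le b|t-\la|^2$, i.e.\ to
\begin{equation*}
\frac ab+2t\,\Re\la\,\le\,|\la|^2 .
\end{equation*}
Because $t\le\gamma$ and $\Re\la\ge0$, I would bound the left-hand side by $a/b+2\gamma\,\Re\la\le a/b+2\gamma|\la|$, reducing matters to $|\la|^2-2\gamma|\la|-a/b\ge0$. The larger root of this quadratic in $|\la|$ is precisely $\gamma+\sqrt{\gamma^2+a/b}$, so the hypothesis $|\la|>\gamma+\sqrt{\gamma^2+a/b}$ closes the argument; the same strict inequality also guarantees $\la\in\vrho(S)$ when $\la$ happens to be real, so that $(S-\la)^{-1}$ is indeed defined.

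The only genuine difficulty is making this scalar bound uniform in $\la$, i.e.\ collapsing the dependence on the two quantities $\Re\la$ and $|\la|$ into a single condition on $|\la|$. This is the step $2\gamma\,\Re\la\le2\gamma|\la|$, whose worst case is $\Re\la=|\la|$, that is $\la$ on the positive real axis; the hypothesis $\Re\la\ge0$ is exactly what forces this extremal configuration to control all admissible $\la$, and the degenerate cases $\Re\la=0$ and $a=0$ (where the inequalities become equalities) should be checked separately. I would also flag that this last estimate uses $\gamma\ge0$, the situation relevant in the sequel where one may take $\gamma=0$; for $\gamma<0$ the supremum of $a/b+2t\,\Re\la$ is instead attained at $\Re\la=0$ and forces the stronger requirement $|\la|\ge\sqrt{a/b}$. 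The conclusion can equivalently be read off from Remark \ref{r:fla}: for $\Re\la>0$ one has $\vphi_\la\le b$ exactly on $(-\infty,m_\la]$, so the bound holds as soon as $\gamma\le m_\la$, and inserting \eqref{e:mla} recovers the same threshold.
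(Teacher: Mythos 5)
Your argument is correct and shares the paper's skeleton---the inclusion \eqref{e:ba} is read off from \eqref{e:besserer} since $\sigma(S)\subset(-\infty,\gamma]$, and the norm bound is reduced via \eqref{e:estimate} to the scalar claim $\vphi_\la(t)\le b$ for all $t\le\gamma$---but you execute that scalar step differently. The paper deduces $|\la|^2>2\gamma(\Re\la)+\tfrac ab$ from $|\la|^2>2\gamma|\la|+\tfrac ab$ and then invokes the extremal analysis of Remark \ref{r:fla}: for $\Re\la>0$, $\Im\la\neq 0$ the inequality is equivalent to $m_\la>\gamma$ with $m_\la$ from \eqref{e:mla}, and since $\vphi_\la(m_\la)=b$ and $t_{\max}>m_\la>\gamma$ one gets $\vphi_\la\le b$ on $(-\infty,\gamma]$; the case $\Re\la=0$ is settled by the remark's case distinction, and real $\la>0$ needs a separate continuity argument because Remark \ref{r:fla} concerns nonreal $\la$ only. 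You instead clear the denominator, rewrite $\vphi_\la(t)\le b$ as $\tfrac ab+2t\,\Re\la\le|\la|^2$, and reduce to the quadratic $|\la|^2-2\gamma|\la|-\tfrac ab\ge 0$, whose larger root is exactly the threshold; this is more elementary, avoids the case split on $\Im\la$, and covers real $\la$ directly, so the paper's continuity step becomes superfluous (your closing observation that $\gamma\le m_\la$ suffices is precisely the paper's route, so you effectively have both proofs in hand). Finally, the caveat you flag is genuine and applies to the paper as well: the step $2\gamma\,\Re\la\le 2\gamma|\la|$---and equally the paper's $2\gamma|\la|+\tfrac ab\ge 2\gamma(\Re\la)+\tfrac ab$---silently uses $\gamma\ge 0$, and for $\gamma<0$, $a>0$ the second assertion is in fact false as stated: taking $S$ of multiplication type with $\sigma(S)=(-\infty,\gamma]$, $T=(a+bS^2)^{1/2}$, and $\la$ purely imaginary with $\gamma+\sqrt{\gamma^2+\tfrac ab}<|\la|<\sqrt{\tfrac ab}$ gives $\|T(S-\la)^{-1}\|^2\ge\vphi_\la(\gamma)>b$, while your corrected threshold $|\la|\ge\sqrt{a/b}$ is then the right one. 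This is harmless for the paper, whose only application of the corollary (in the proof of Corollary \ref{c:matrix}) has $\gamma\ge 0$, but the statement would merit the explicit hypothesis $\gamma\ge 0$.
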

\begin{proof}
We only have to prove the last claim, which follows from \eqref{e:estimate} if the function $\vphi_\la$ from \eqref{e:Delta} does not exceed the value $b$ on $(-\infty,\gamma]$. Hence, let $|\la| > \gamma + \sqrt{\gamma^2+\tfrac ab}$ and $\Re\la\ge 0$. Then also $|\la|^2 > 2\gamma|\la| + \tfrac ab\ge 2\gamma(\Re\la) + \tfrac ab$. Assume that $\Re\la = 0$. Then $(\Im\la)^2 > \tfrac ab$, and Remark \ref{r:fla} implies that $\vphi_\la(t)\le b$ for all $t\in\R$. Let $\Re\la > 0$, $\Im\la\neq 0$. Then $|\la|^2 > 2\gamma(\Re\la) + \tfrac ab$ is equivalent to $m_\la > \gamma$ with $m_\la$ as defined in \eqref{e:mla}. Note that $\vphi_\la(m_\la) = b$ and that $t_{\max} > m_\la > \gamma$. Hence $\vphi_\la(t)\le b$ for all $t\le\gamma$. For $\Re\la > 0$ and $\Im\la = 0$ the claim follows by continuity of $\mu\mapsto T(S-\mu)^{-1}$ on $\vrho(S)$, which is an immediate consequence of $T(S-\la)^{-1} - T(S-\mu)^{-1} = (\la-\mu)\big[T(S-\la)^{-1}\big](S-\mu)^{-1}$.
\end{proof}

A similar result holds for the case where $S$ is bounded from below. It follows from Corollary \ref{c:smallerb} by considering $-S$.

\begin{figure}[ht]
\begin{center}
\includegraphics[scale=.23]{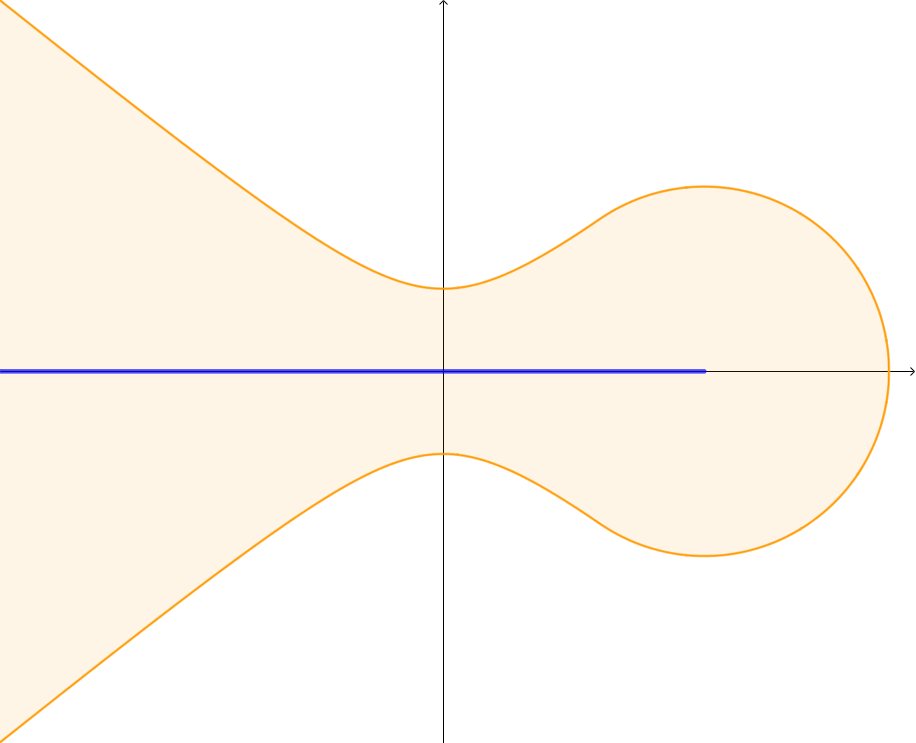}
\end{center}
\caption{Indication of the unbounded enclosure for $K_S(T)$ in \eqref{e:ba} with $a = \gamma = 10$ and $b = 0.4$. The blue segment represents the half-line $(-\infty,\gamma]$.}
\end{figure}

Let us briefly consider the situation where $\calK = \calH$ and thus $T : \calH\to\calH$. If $\la\in\vrho(S)$ and $\|T(S-\la)^{-1}\| < 1$, then
\[
S + T - \la = \big[I + T(S-\la)^{-1}\big]\,(S-\la)
\]
implies that also $\la\in\vrho(S+T)$. By contraposition, $\sigma(S+T)\subset \sigma(S)\cup K_S(T)$. This immediately leads to the following corollary.

\begin{cor}\label{c:improvement}
Let $\calH$ be a Hilbert space, let $S$ be a self-adjoint operator in $\calH$, and let $T : \calH\to\calH$ be such that
\[
\|Tf\|^2\,\le\,a\|f\|^2 + b\|Sf\|^2,\qquad f\in\dom S\subset\dom T,
\]
where $a,b\ge 0$, $b<1$. Then
\begin{equation}\label{e:besser}
\sigma(S+T)\,\subset\,\bigcup_{t\in\sigma(S)}B_{\sqrt{a+bt^2}}(t)\,\subset\,\left\{\la\in\C : (\Im\la)^2\le a+\frac{b}{1-b}\,(\Re\la)^2\right\}.
\end{equation}
\end{cor}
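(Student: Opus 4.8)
The plan is to deduce Corollary \ref{c:improvement} almost immediately from Lemma \ref{l:norm} together with the elementary factorization argument already sketched in the paragraph preceding the statement. The whole point of having introduced the set $K_S(T)$ is that it captures precisely the obstruction to the resolvent perturbation series, so the corollary should fall out by combining the spectral inclusion $\sigma(S+T)\subset\sigma(S)\cup K_S(T)$ with the geometric enclosure for $K_S(T)$ from \eqref{e:besserer} and \eqref{e:hull}.

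First I would recall the factorization: for $\la\in\vrho(S)$ with $\|T(S-\la)^{-1}\|<1$, one writes
\begin{equation*}
S+T-\la = \bigl[I + T(S-\la)^{-1}\bigr](S-\la).
\end{equation*}
Since $\|T(S-\la)^{-1}\|<1$, the bracketed factor is boundedly invertible by the Neumann series, and $(S-\la)$ is a bijection from $\dom S$ onto $\calH$; hence $S+T-\la$ is a bijection with bounded inverse, i.e.\ $\la\in\vrho(S+T)$. Taking the contrapositive over all $\la\in\vrho(S)$ shows that any $\la\in\vrho(S)\cap\sigma(S+T)$ must satisfy $\|T(S-\la)^{-1}\|\ge 1$, that is $\la\in K_S(T)$. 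Consequently $\sigma(S+T)\subset\sigma(S)\cup K_S(T)$.

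Next I would observe that the two enclosures in \eqref{e:besser} now follow directly. For the first inclusion, note that $\sigma(S)\subset\R$ (as $S$ is self-adjoint) and that each $t\in\sigma(S)$ lies in $B_{\sqrt{a+bt^2}}(t)$ trivially, so $\sigma(S)\subset\bigcup_{t\in\sigma(S)}B_{\sqrt{a+bt^2}}(t)$; combining this with $K_S(T)\subset\bigcup_{t\in\sigma(S)}B_{\sqrt{a+bt^2}}(t)$ from \eqref{e:besserer} and with the inclusion $\sigma(S+T)\subset\sigma(S)\cup K_S(T)$ just established gives the first containment in \eqref{e:besser}. The second inclusion in \eqref{e:besser} is exactly the equality computed in \eqref{e:hull}, enlarging $\sigma(S)$ to all of $\R$ in the union.

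I do not anticipate a genuine obstacle here, since both the factorization and the set-enclosure are already in hand; the only point requiring a word of care is the hypothesis $b<1$, which is what guarantees $\sqrt{b}<1$ and hence that $\|T(S-\la)^{-1}\|<1$ holds off the enclosure (this is the content of \eqref{e:estimate} and the reason the limiting value $\sqrt{b}$ at infinity does not spoil the argument). If anything is delicate, it is the tacit use of $\dom S\subset\dom T$ to make sense of $S+T$ on $\dom S$ and of the factorization as an identity of operators on $\dom S$; I would simply remark that $T(S-\la)^{-1}\in L(\calH)$ is well defined precisely because $\ran(S-\la)^{-1}=\dom S\subset\dom T$.
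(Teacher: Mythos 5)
Your proposal is correct and follows essentially the same route as the paper: the factorization $S+T-\la = [I + T(S-\la)^{-1}](S-\la)$ with the Neumann series yields $\sigma(S+T)\subset\sigma(S)\cup K_S(T)$ (this is exactly the argument in the paragraph preceding the corollary), and combining this with the enclosures \eqref{e:besserer} and \eqref{e:hull} of Lemma \ref{l:norm} gives \eqref{e:besser}. Your additional remarks --- that $\sigma(S)\subset\R$ lies trivially in the union of balls, and that $\dom S\subset\dom T$ is what makes $T(S-\la)^{-1}\in L(\calH)$ well defined --- are accurate points of care that the paper leaves implicit.
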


\begin{rem}\label{r:improvement}
Corollary \ref{c:improvement} improves and refines the first two parts of Theorem 2.1 in \cite{ct}. The general spectral inclusion in \cite{ct} is
\[
\sigma(S+T)\,\subset\,\left\{\la\in\C : (\Im\la)^2\,\le\,\frac{a + b(\Re\la)^2}{1-b}\right\}.
\]
In the case where $a>0$ the spectral enclosure \eqref{e:besser} is obviously strictly sharper. However, both boundary curves have the same asymptotes so that the improvement of the spectral inclusion only takes effect for $|\Re\la|$ `not too large' (see Figure \ref{f:comparison} below). In the second part of \cite[Thm. 2.1]{ct} it was proved that if $(\sigma_-,\sigma_+)$ is a spectral gap of $S$ such that $\sigma_-' := \sigma_- + \sqrt{a+b\sigma_-^2} < \sigma_+ - \sqrt{a+b\sigma_+^2} =: \sigma_+'$, then $\{\la : \sigma_-' < \Re\la < \sigma_+'\}\subset\vrho(S+T)$. This now follows immediately from the  first enclosure in \eqref{e:besser}.
\end{rem}

\begin{figure}[ht]
\begin{center}
\includegraphics[scale=.4]{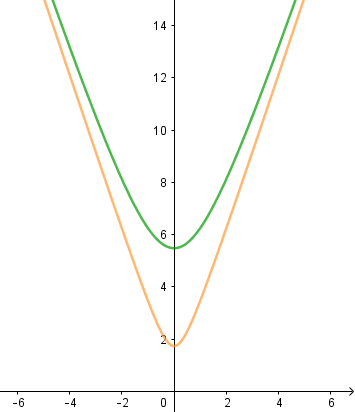}

\end{center}
\caption{The two boundary curves in $\C^+$ from \cite{ct} (green) and \eqref{e:besser} (orange) for $a=3$ and $b=0.9$.}\label{f:comparison}
\end{figure}

\begin{ex}
We consider the massless Dirac operator $H_0 = -i(\sigma_1\partial_x + \sigma_2\partial_y)$ on $\R^2$ with Hermitian matrices $\sigma_1,\sigma_2\in\C^{2\times 2}$ as in \cite[Section 5.1]{ct}. Its domain is given by $\dom H_0 = H^1(\R^2)^2\subset L^2(\R^2)^2$. As in \cite{ct} one shows that for a potential $V\in L^p(\R^2,\C^{2\times 2})$, $p>2$, one has
\[
\|Vf\|_2^2\,\le\,C_p^{\frac{2p}{p-2}}b^{-\frac 2{p-2}}\|f\|_2^2 + b\|H_0f\|_2^2,\qquad f\in\dom H_0,
\]
for every $b\in (0,1)$, where $C_p = \frac{\|V\|_p}{[2\pi(p-2)]^{1/p}}$. By Corollary \ref{c:improvement}, we have
\[
\sigma(H_0+V)\,\subset\,\bigcap_{b\in (0,1)}\left\{\la\in\C : (\Im\la)^2\le C_p^{\frac{2p}{p-2}}b^{-\frac 2{p-2}}+\frac{b}{1-b}\,(\Re\la)^2\right\}.
\]
The envelope of this set in the first quadrant is the curve $\gamma(b) = (x(b),y(b))$, $b\in (0,1)$, where
\[
x(b) = \left(\frac{C_p}{b}\right)^{\frac{p}{p-2}}\cdot\sqrt{\frac{2}{p-2}}(1-b^2)
\qquad\text{and}\qquad
y(b) = \left(\frac{C_p}{b}\right)^{\frac{2}{p-2}}\cdot C_p\cdot\sqrt{\frac{p-2b^2}{p-2}}.
\]
The spectrum of $H_0 + V$ is thus bounded by this curve. Figure \ref{f:ex_comparison} shows the curve $\gamma$ and the spectral inclusion from \cite{ct}.

\begin{figure}[ht]
\begin{center}
\includegraphics[scale=.4]{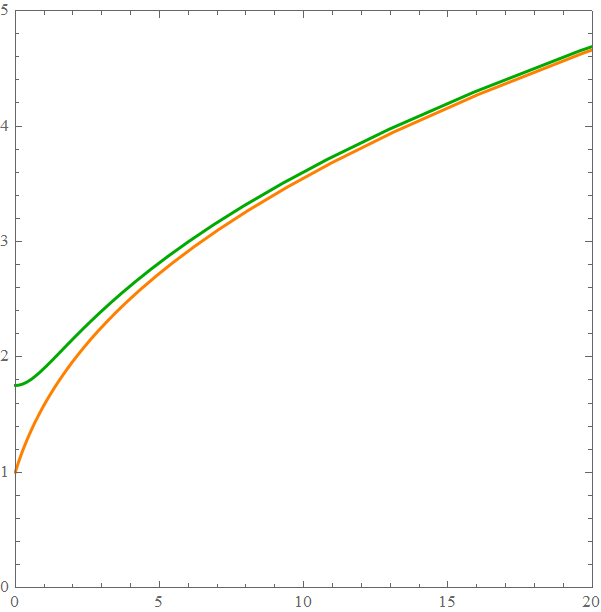}

\end{center}
\caption{The bounding curve $\gamma$ (orange) and the spectral inclusion from \cite{ct} (green) in the case $p=5$ and $\|V\|_5 = 1.8$.}\label{f:ex_comparison}
\end{figure}
\end{ex}

\section{Spectral properties of diagonally dominant J-self-adjoint operator matrices}\label{s:matrix}
In this section we consider a Hilbert space $\calH = \calH_+\oplus\calH_-$ (where $\oplus$ denotes the orthogonal sum of subspaces) and operator matrices of the form
\begin{equation}\label{e:S}
S = \mat{S_+}M{-M^*}{S_-},\qquad\dom S = \dom S_+\oplus\dom S_-,
\end{equation}
where $S_+$ and $S_-$ are self-adjoint operators in the Hilbert spaces $\calH_+$ and $\calH_-$, respectively, and $M : \calH_-\to\calH_+$ is $S_-$-bounded such that also $M^*$ is $S_+$-bounded. In particular, $\dom S_-\subset\dom M$ and $\dom S_+\subset\dom M^*$. Such operator matrices are called {\em diagonally dominant} (cf.\ \cite[Def.\ 2.2.1]{t}). It is clear that the operator $S$ is $J$-symmetric, where $J$ is the involution operator
\[
J = \mat I00{-I},
\]
which induces the indefinite inner product
\[
\left[\vek{f_+}{f_-},\vek{g_+}{g_-}\right] = \left(\mat I00{-I}\vek{f_+}{f_-},\vek{g_+}{g_-}\right) = (f_+,g_+) - (f_-,g_-), \qquad f_\pm,g_\pm\in\calH_\pm.
\]
The following theorem is essentially a generalization of \cite[Thm.~4.3]{glmpt}, where the operator $M$ was assumed to be bounded.

\begin{thm}\label{t:new}
Assume that both the $S_-$-bound of $M$ and the $S_+$-bound of $M^*$ are less than one. Then $S$ is $J$-self-adjoint,
\begin{equation}\label{e:incl}
\sigma(S)\setminus\R\,\subset\,K_{S_-}(M)\,\cap\,K_{S_+}(M^*),
\end{equation}
and
\begin{enumerate}
\item[\rm (i)]  $(\R\cap\vrho(S_-))\setminus K_{S_-}(M)$ is of positive type with respect to $S$.
\item[\rm (ii)] $(\R\cap\vrho(S_+))\setminus K_{S_+}(M^*)$ is of negative type with respect to $S$.
\end{enumerate}
Moreover, if $\la\in\C\setminus\R$, setting
\[
L_+(\la) := M^*(S_+-\la)^{-1}\qquad\text{and}\qquad L_-(\la) := M(S_--\la)^{-1},
\]
we have
\begin{equation}\label{e:resolvent}
\|(S-\la)^{-1}\|\,\le\,\frac{1}{|\Im\la|}\cdot
\begin{cases}
\frac{1 + \|L_-(\la)\| + \|L_-(\la)\|^2}{1-\|L_-(\la)\|^2} &\text{if $\|L_-(\la)\| < 1$}\\
\frac{1 + \|L_+(\la)\| + \|L_+(\la)\|^2}{1-\|L_+(\la)\|^2} &\text{if $\|L_+(\la)\| < 1$}.
\end{cases}
\end{equation}
\end{thm}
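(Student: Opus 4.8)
The plan is to treat the four assertions separately, the non-real resolvent being handled by a Schur-complement reduction. For $J$-self-adjointness I would write
$$
JS = \diag(S_+,-S_-) + B,\qquad B = \mat 0M{M^*}0,\qquad \dom = \dom S_+\oplus\dom S_- .
$$
The diagonal part is self-adjoint, $B$ is symmetric because $(Mf_-,g_+)=(f_-,M^*g_+)$ and $(M^*f_+,g_-)=(f_+,Mg_-)$, and from $\|B\vek{f_+}{f_-}\|^2 = \|Mf_-\|^2+\|M^*f_+\|^2$ together with the two relative bounds one reads off that $B$ is $\diag(S_+,-S_-)$-bounded with bound $<1$. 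Kato-Rellich then yields self-adjointness of $JS$, i.e.\ $S$ is $J$-self-adjoint.

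For the non-real part fix $\la\in\C\setminus\R$, so $\la\in\vrho(S_\pm)$, and suppose first $\|L_-(\la)\|<1$, i.e.\ $\la\notin K_{S_-}(M)$. Solving $(S-\la)\vek{f_+}{f_-}=\vek{g_+}{g_-}$ by eliminating $f_-=(S_--\la)^{-1}(g_-+M^*f_+)$ from the lower row reduces the system to the Schur complement
$$
\Sigma_-(\la)f_+ = g_+ - L_-(\la)g_-,\qquad \Sigma_-(\la) := (S_+-\la)+M(S_--\la)^{-1}M^*,\quad\dom\Sigma_-(\la)=\dom S_+ .
$$
To invert $\Sigma_-(\la)$ I would use that $(S_+f,f)\in\R$ and, by the spectral theorem for $S_-$, $\Im((S_--\la)^{-1}M^*f,M^*f)=(\Im\la)\|(S_--\la)^{-1}M^*f\|^2$, so that
$$
\Im(\Sigma_-(\la)f,f) = -(\Im\la)\big(\|f\|^2 - \|(S_--\la)^{-1}M^*f\|^2\big).
$$
Since $(S_--\la)^{-1}$ and $(S_--\ol\la)^{-1}$ commute, $\|(S_--\la)^{-1}M^*\| = \|L_-(\ol\la)\| = \|L_-(\la)\|<1$, whence the bracket is $\ge(1-\|L_-(\la)\|^2)\|f\|^2$ and
$$
\|\Sigma_-(\la)f\|\ \ge\ |\Im\la|\,\big(1-\|L_-(\la)\|^2\big)\,\|f\| .
$$

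As a perturbation of $S_+-\la$ with relative bound $<1$ the operator $\Sigma_-(\la)$ is closed, so the last estimate gives closed range, and the same estimate for $\Sigma_-(\ol\la)=\Sigma_-(\la)^*$ gives trivial cokernel; thus $\Sigma_-(\la)$ is boundedly invertible with $\|\Sigma_-(\la)^{-1}\|\le[|\Im\la|(1-\|L_-(\la)\|^2)]^{-1}$ and $\la\in\vrho(S)$. This proves $\sigma(S)\setminus\R\subset K_{S_-}(M)$; eliminating $f_+$ through $\Sigma_+(\la):=(S_--\la)+M^*(S_+-\la)^{-1}M$ gives $\sigma(S)\setminus\R\subset K_{S_+}(M^*)$, hence \eqref{e:incl}. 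Reassembling $f_\pm$ from $f_+=\Sigma_-(\la)^{-1}(g_+-L_-(\la)g_-)$ and $f_-=(S_--\la)^{-1}(g_-+M^*f_+)$ and inserting $\|(S_--\la)^{-1}\|\le|\Im\la|^{-1}$, $\|(S_--\la)^{-1}M^*\|=\|L_-(\la)\|$ and the bound on $\|\Sigma_-(\la)^{-1}\|$, the triangle inequality produces \eqref{e:resolvent}, the $L_+$-case being symmetric. For the type statements I would take $\la_0\in[(\R\cap\vrho(S_-))\setminus K_{S_-}(M)]\cap\sigma(S)$; since $S$ is $J$-self-adjoint, $\la_0\in\sap(S)$ (if $S-\la_0$ were bounded below, any $g\perp\ran(S-\la_0)$ would give $S(Jg)=\la_0Jg$ with $Jg\neq0$, contradicting injectivity). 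For an approximating sequence $x_n=\vek{f_+^n}{f_-^n}$, $\|x_n\|=1$, $(S-\la_0)x_n\to0$, the lower row yields $\|f_-^n\|\le\|L_-(\la_0)\|\,\|f_+^n\|+o(1)$, which forces $\liminf[x_n,x_n]=\liminf(\|f_+^n\|^2-\|f_-^n\|^2)\ge\frac{1-\|L_-(\la_0)\|^2}{1+\|L_-(\la_0)\|^2}>0$, giving $\la_0\in\sp(S)$ and hence (i); (ii) is symmetric.

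I expect the main obstacle to be the invertibility of $\Sigma_-(\la)$, and within it the sign bookkeeping of $\Im(\Sigma_-(\la)f,f)$: the self-adjoint diagonal and the resolvent term contribute imaginary parts of opposite sign, and it is precisely the strict bound $\|L_-(\la)\|<1$ that prevents their cancellation and yields the quantitative lower bound. The surjectivity — the identification $\Sigma_-(\la)^*=\Sigma_-(\ol\la)$ and the closedness of $\Sigma_-(\la)$ under the relatively bounded perturbation — is the technical step to carry out with care.
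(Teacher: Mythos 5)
Your overall architecture coincides with the paper's (Kato--Rellich for $J$-self-adjointness, first/second Schur complement, the numerical-range lower bound $\|\Sigma_-(\la)f\|\ge|\Im\la|(1-\|L_-(\la)\|^2)\|f\|$, and the approximating-sequence argument for the sign-type statements, where your treatment of $\sigma(S)\cap\R\subset\sap(S)$ and the constant $\tfrac{1-\tau^2}{1+\tau^2}$ is in fact slightly more careful than the paper's). But there is one genuine gap, and it sits exactly where you predicted: the surjectivity of $\Sigma_-(\la)$. You assert $\Sigma_-(\la)^*=\Sigma_-(\ol\la)$, but only the inclusion $\Sigma_-(\ol\la)\subset\Sigma_-(\la)^*$ is available (this is all the paper claims, writing $S_1(\ol\mu)\subset S_1(\mu)^*$), and the inclusion goes the wrong way for your purpose: injectivity of the restriction $\Sigma_-(\ol\la)$ says nothing about the kernel of the possibly larger operator $\Sigma_-(\la)^*$. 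Concretely, writing $\Sigma_-(\la)=B(S_+-\la)$ with $B=I+M(S_--\la)^{-1}M^*(S_+-\la)^{-1}\in L(\calH_+)$, one gets $\Sigma_-(\la)^*=(S_+-\ol\la)B^*$ with domain $\{g:B^*g\in\dom S_+\}$; since $B^*$ involves the \emph{closure} $\ol{(S_+-\ol\la)^{-1}\ol M}$, there is no reason this domain should reduce to $\dom S_+$, i.e.\ to $\dom\Sigma_-(\ol\la)$. Nor can the adjoint identity simply be dropped: a closed operator with numerical range in a half-plane and closed range for all $\mu$ outside that half-plane may still have constant nonzero deficiency there (maximal first-order differentiation on a half-line is the standard example), so your lower bound alone does not rule out a residual cokernel. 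This is precisely why the paper adds the step you omitted: it exhibits \emph{one} resolvent point by factoring $S_1(\la)-ri=\bigl(I+T_\la M^*(S_+-\mu)^{-1}\bigr)(S_+-\mu)$ with $\mu=\la+ri$, uses Lemma \ref{l:norm} to make $\|M^*(S_+-\mu)^{-1}\|<1$ for large $r$ (Neumann series), and then invokes constancy of the deficiency index on the connected set where the lower bound holds (Kato, Ch.\ IV.5.6) to conclude $0\in\vrho(\Sigma_-(\la))$. Your proof needs this (or an equivalent) insertion; note it uses the hypothesis on $M^*$, which your Schur-complement branch otherwise never touches.

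A secondary, repairable point: your assembly of \eqref{e:resolvent} ``by the triangle inequality'' does not quite deliver the stated constant. From $\|f_+\|\le\delta^{-1}(\|g_+\|+t\|g_-\|)$ and $\|f_-\|\le t\|f_+\|+|\Im\la|^{-1}\|g_-\|$ with $t=\|L_-(\la)\|$, $\delta=|\Im\la|(1-t^2)$, crude estimates $\|g_\pm\|\le\|g\|$, $\|f\|\le\|f_+\|+\|f_-\|$ give $\tfrac{2+2t}{|\Im\la|(1-t^2)}$, which exceeds $\tfrac{1+t+t^2}{|\Im\la|(1-t^2)}$ for all $t\in[0,1)$. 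You must exploit the orthogonality $\|g\|^2=\|g_+\|^2+\|g_-\|^2$, $\|f\|^2=\|f_+\|^2+\|f_-\|^2$: the norm of the resulting $2\times 2$ coefficient matrix $\left(\begin{smallmatrix}\delta^{-1} & t\delta^{-1}\\ t\delta^{-1} & t^2\delta^{-1}+|\Im\la|^{-1}\end{smallmatrix}\right)$ computes to $\tfrac{1}{|\Im\la|(1-t)}$, which is $\le\tfrac{1+t+t^2}{|\Im\la|(1-t^2)}$, so \eqref{e:resolvent} follows (and is even slightly improved); the paper instead estimates the three factors of the standard resolvent factorization $(S-\la)^{-1}=\left(\begin{smallmatrix}I&0\\T_{\ol\la}^*&I\end{smallmatrix}\right)\left(\begin{smallmatrix}S_1(\la)^{-1}&0\\0&(S_--\la)^{-1}\end{smallmatrix}\right)\left(\begin{smallmatrix}I&-T_\la\\0&I\end{smallmatrix}\right)$, obtaining $1+t+t^2$ from $\|L^*L\|$. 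Your use of normality of $(S_--\la)^{-1}$ to identify $\|(S_--\la)^{-1}M^*\|=\|L_-(\la)\|$ matches the paper and is correct.
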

\begin{proof}
By assumption, there exist constants $a_\pm,b_\pm\ge 0$, $b_\pm < 1$, such that
\[
\|Mf_-\|^2\,\le\,a_-\|f_-\|^2 + b_-\|S_-f_-\|^2,\qquad f_-\in\dom S_-\subset\dom M
\]
and
\[
\|M^*f_+\|^2\,\le\,a_+\|f_+\|^2 + b_+\|S_+f_+\|^2,\qquad f_+\in\dom S_+\subset\dom M^*.
\]
Define the operators
\[
S_0 := \mat{S_+}00{S_-}\quad\text{and}\quad T := \mat 0M{-M^*}0
\]
with $\dom S_0 = \dom S_+\oplus\dom S_-$ and $\dom T = \dom M^*\oplus\dom M$. Obviously, $S_0$ is $J$-self-adjoint and $T$ is $J$-symmetric. Also, for $f = f_++f_-$ with $f_\pm\in\dom S_\pm$ we have
\begin{align*}
\|JTf\|^2
&= \|Mf_-\|^2 + \|M^*f_+\|^2\le a_-\|f_-\|^2 + b_-\|S_-f_-\|^2 + a_+\|f_+\|^2 + b_+\|S_+f_+\|^2\\
&\le\max\{a_+,a_-\}\|f\|^2 + \max\{b_+,b_-\}\|JS_0f\|^2.
\end{align*}
Hence, $JT$ is $JS_0$-bounded with $JS_0$-bound less than one. By the Kato-Rellich theorem, $J(S_0 + T)$ is self-adjoint, which means that $S = S_0 + T$ is $J$-self-adjoint.

For the proof of \eqref{e:incl} let $\la\in\C\setminus\R$ such that $\|M(S_--\la)^{-1}\|<1$ (i.e., $\|L_-(\la)\|<1$). We have to show that $\la\in\rho(S)$. For this, we make use of the first Schur complement of $S$ which, for $\mu\in\vrho(S_-)$, is defined by
\[
S_1(\mu) = S_+ - \mu + M(S_- - \mu)^{-1}M^*,\qquad\dom S_1(\mu) = \dom S_+.
\]
We shall exploit the fact that $\la\in\vrho(S)$ follows if $S_1(\la)$ is boundedly invertible (see, e.g., \cite[Thm.\ 2.3.3]{t}). Since $\|M(S_--\la)^{-1}\| < 1$, the operator $M(S_--\la)^{-1}M^*$ is $S_+$-bounded with $S_+$-bound $b_+<1$. Hence, $S_1(\la)$ is closed (see \cite[Thm.\ IV.1.1]{k}). Now, for $\mu\in\vrho(S_-)$ we have (cf.\ \eqref{e:adjoint})
\begin{align*}
S_1(\la) - S_1(\mu)
&= \mu - \la + M[(S_- - \la)^{-1} - (S_- - \mu)^{-1}]M^*\\
&= \mu - \la + (\la-\mu)M(S_- - \la)^{-1}(S_- - \mu)^{-1}M^*\\
&= (\la-\mu)\big(L_-(\la) L_-(\ol\mu)^* - I\big)|_{\dom S_+}.
\end{align*}
From this it easily follows that $S_1(\mu)$ is closed for every $\mu\in\vrho(S_-)$. We also have $S_1(\ol\mu)\subset S_1(\mu)^*$. Therefore, for $f_+\in\dom S_+$, $\|f_+\|=1$,
\[
\Im\,(S_1(\la)f_+,f_+) = \frac 1{2i}\big((S_1(\la)-S_1(\ol\la))f_+,f_+\big) = (\Im\la)\big(\big(L_-(\la)L_-(\la)^* - I\big)f_+,f_+\big).
\]
But $\|L_-(\la)\| < 1$. Hence, if $\Im\la > 0$ (for $\Im\la < 0$ a similar argument applies), the numerical range $W(S_1(\la))$ of $S_1(\la)$ is contained in the half-plane $H := \{z : \Im z\le -\delta\}$, where $\delta := (1-\|L_-(\la)\|^2)(\Im\la) > 0$. Hence, for every $\mu\notin H$ and $f_+\in\dom S_+$, $\|f_+\|=1$,
\begin{equation}\label{e:notnew}
\|(S_1(\la)-\mu)f_+\|\,\ge\,|(S_1(\la)f_+,f_+) - \mu\,|\,\ge\,\dist(\mu,H) = \Im\mu + \delta.
\end{equation}
This implies that for each $\mu$ with $\Im\mu > -\delta$ the operator $S_1(\la)-\mu$ is injective and has closed range. Let us now prove that there exists $r > 0$ such that $ri\in\vrho(S_1(\la))$. Then it follows (cf.\ \cite[Ch.\ IV.5.6]{k}) that $0\in\vrho(S_1(\la))$ and thus $\la\in\vrho(S)$. Set $\mu := \la + ri$. Then
\[
S_1(\la)-ri = S_+ - \mu + L_-(\la) M^* = \big(I + L_-(\la) M^*(S_+-\mu)^{-1}\big)(S_+-\mu).
\]
By Lemma \ref{l:norm} the bounded operator $M^*(S_+-\mu)^{-1}$ has norm less than one for sufficiently large $r$. This shows that $S_1(\la)-ri$ is boundedly invertible for such $r$.

Above we have concluded from $\|M(S_--\la)^{-1}\| < 1$ that $\la\in\vrho(S)$. If $\|M^*(S_+-\la)^{-1}\|<1$, one obtains $\la\in\vrho(S)$ by means of analogous arguments, using the second Schur complement $S_2(\la) = S_- - \la + M^*(S_+-\la)^{-1}M$.

Let us now prove the estimate \eqref{e:resolvent} for the resolvent of $S$. For this, let $\la\in\C\setminus\R$ such that $\|L_-(\la)\|<1$ as above. By \cite[Thm.\ 2.3.3]{t} we have
\begin{equation}\label{e:resi}
(S - \la)^{-1}
= \mat{I}{0}{L_-(\ol\la)^*}{I}  \mat{S_1(\la)^{-1}}{0}{0}{(S_--\la)^{-1}}  \mat{I}{-L_-(\la)}{0}{I}.
\end{equation}
Denote the last factor by $L$. Then
\begin{align*}
\|L^*L\|
&= \left\|\mat{I}{0}{-L_-(\la)^*}{I}\mat{I}{-L_-(\la)}{0}{I}\right\| = \left\|\mat{I}{-L_-(\la)}{-L_-(\la)^*}{I + L_-(\la)^*L_-(\la)}\right\|\\
&\le\,\left\|\mat{I}{0}{0}{I}\right\| + \left\|\mat{0}{0}{0}{L_-(\la)^*L_-(\la)}\right\| + \left\|\mat{0}{L_-(\la)}{L_-(\la)^*}{0}\right\| = 1 + \|L_-(\la)\|^2 + \|L_-(\la)\|.
\end{align*}
Note that, since $(S_--\la)^{-1}$ is normal, we have
\[
\|L_-(\ol\la)\| = \|M(S_--\ol\la)^{-1}\| = \|(S_--\la)^{-1}M^*\| = \|(S_--\ol\la)^{-1}M^*\| = \|M(S_--\la)^{-1}\| = \|L_-(\la)\|.
\]
Therefore, for the first factor in \eqref{e:resi} we have the same estimate as for the last. For the middle factor in \eqref{e:resi} we obtain from \eqref{e:notnew} with $\mu=0$ that
\[
\|S_1(\la)^{-1}\|\le\delta^{-1} = |\Im\la|^{-1}(1-\|L_-(\la)\|^2)^{-1}
\]
and therefore \eqref{e:resolvent} in the case $\|L_-(\la)\|<1$ follows. The estimate for the other case can be derived similarly by using the second Schur complement.

In the following last step of this proof we shall show that (i) holds true. The proof of (ii) follows analogous lines. Let $\la\in\R\cap\sigma(S)\cap\vrho(S_-)$ such that $\tau := \|M(S_--\la)^{-1}\| < 1$. We have to show that $\la\in\sp(S)$. For this, assume that $(f_n)\subset\dom S$ is a sequence with $\|f_n\|=1$ and $(S-\la)f_n\to 0$ as $n\to\infty$. Let $f_n = f_n^+ + f_n^-$ with $f_n^\pm\in\dom S_\pm$. Then
\[
-M^*f_n^+ + (S_--\la)f_n^-\to 0,\qquad n\to\infty,
\]
in $\calH_-$. Applying $-(S_--\la)^{-1}$ gives (cf.\ \eqref{e:adjoint})
\[
[M(S_--\la)^{-1}]^*f_n^+ - f_n^-\,\to\,0,\quad n\to\infty.
\]
Set $g_n^- := [M(S_--\la)^{-1}]^*f_n^+$, $n\in\N$. Since both $(g_n^-)$ and $(f_n^-)$ are bounded sequences, we conclude that $\veps_n := \|g_n^-\|^2 - \|f_n^-\|^2\to 0$ as $n\to\infty$ and so
\[
\|f_n^+\|^2 = 1 - \|f_n^-\|^2 = 1 - \|g_n^-\|^2 + \veps_n\,\ge\,1 - \tau^2\|f_n^+\|^2 + \veps_n,
\]
that is, $\|f_n^+\|^2\ge\tfrac{1}{2(1+\tau^2)}$ for $n$ sufficiently large. Therefore, we obtain
\[
[f_n,f_n] = \|f_n^+\|^2 - \|f_n^-\|^2 = \|f_n^+\|^2 - \|g_n^-\|^2 + \veps_n\,\ge\,(1-\tau^2)\|f_n^+\|^2 + \veps_n\,\ge\,\frac{1-\tau^2}{2(1+\tau^2)} + \veps_n
\]
and thus, indeed, $\la\in\sp(S)$.
\end{proof}

Although \eqref{e:incl} in Theorem \ref{t:new} is a fairly accurate estimate on the non-real spectrum, it requires complete knowledge about the norms of $M(S_--\la)^{-1}$ and $M^*(S_+-\la)^{-1}$ for all $\la\in\C\setminus\R$. However, by making use of Lemma \ref{l:norm} we immediately obtain the following theorem, where the spectral inclusion is expressed in terms of the spectra of $S_-$ and $S_+$ instead of parameter-dependent norms.

\begin{thm}\label{t:matrix_gen}
Consider the operator matrix $S$ in \eqref{e:S} with self-adjoint diagonal entries $S_+$ and $S_-$ and assume that
\[
\|Mf_-\|^2\,\le\,a_-\|f_-\|^2 + b_-\|S_-f_-\|^2,\quad f_-\in\dom S_-\subset\dom M,
\]
and
\[
\|M^*f_+\|^2\,\le\,a_+\|f_+\|^2 + b_+\|S_+f_+\|^2,\quad f_+\in\dom S_+\subset\dom M^*,
\]
where $a_\pm,b_\pm\ge 0$, $b_\pm < 1$. Then
\begin{equation}\label{e:hyp_int}
\sigma(S)\setminus\R\,\subset\,\left(\bigcup_{t\in\sigma(S_-)}B_{\sqrt{a_-+b_-t^2}}(t)\right)\,\cap\,\left(\bigcup_{t\in\sigma(S_+)}B_{\sqrt{a_++b_+t^2}}(t)\right).
\end{equation}
Moreover, setting $\Delta_\pm(t) := [t-\sqrt{a_\pm+b_\pm t^2},t+\sqrt{a_\pm+b_\pm t^2}]$, we have
\begin{enumerate}
\item[\rm (i)]  $\R\setminus\bigcup_{t\in\sigma(S_-)}\Delta_-(t)$ is of positive type with respect to $S$.
\item[\rm (ii)] $\R\setminus\bigcup_{t\in\sigma(S_+)}\Delta_+(t)$ is of negative type with respect to $S$.
\end{enumerate}
\end{thm}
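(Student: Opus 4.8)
The plan is to read off the statement as a direct combination of Theorem \ref{t:new} with the geometric enclosure of $K_S(T)$ provided by Lemma \ref{l:norm}. First I would check that the hypotheses of Theorem \ref{t:new} are met: since the relative bound in \eqref{e:rb} is the coefficient $b$ multiplying $\|Sf\|^2$, the assumptions $b_\pm<1$ say precisely that the $S_-$-bound of $M$ and the $S_+$-bound of $M^*$ are less than one. Hence Theorem \ref{t:new} applies, giving that $S$ is $J$-self-adjoint and that \eqref{e:incl} holds. I would then invoke Lemma \ref{l:norm} twice: applied to the pair $(S_-,M)$ with constants $a_-,b_-$ it yields $K_{S_-}(M)\subset\bigcup_{t\in\sigma(S_-)}B_{\sqrt{a_-+b_-t^2}}(t)$, and applied to $(S_+,M^*)$ with $a_+,b_+$ it gives the analogous enclosure of $K_{S_+}(M^*)$. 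Substituting both into \eqref{e:incl} immediately produces the non-real spectral inclusion \eqref{e:hyp_int}.

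For the type statements I would first record the elementary geometric fact that $\Delta_\pm(t)$ is exactly the real trace of the closed disc $B_{\sqrt{a_\pm+b_\pm t^2}}(t)$: for $\la\in\R$ one has $\la\in B_{\sqrt{a_\pm+b_\pm t^2}}(t)$ if and only if $|\la-t|\le\sqrt{a_\pm+b_\pm t^2}$, i.e.\ if and only if $\la\in\Delta_\pm(t)$. I treat (i) in detail, (ii) being entirely symmetric (with $S_+$, $M^*$ in place of $S_-$, $M$). Fix $\la\in\R\setminus\bigcup_{t\in\sigma(S_-)}\Delta_-(t)$. Since $t\in\Delta_-(t)$ always holds (the radius being nonnegative), no $t\in\sigma(S_-)$ equals $\la$, so $\la\in\vrho(S_-)$; and by the trace identity $\la$ lies outside every disc $B_{\sqrt{a_-+b_-t^2}}(t)$, hence outside $K_{S_-}(M)$ by Lemma \ref{l:norm}. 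Therefore $\la\in(\R\cap\vrho(S_-))\setminus K_{S_-}(M)$, a set of positive type with respect to $S$ by Theorem \ref{t:new}(i). As ``positive type'' means $\Delta\cap\sigma(S)\subset\sp(S)$ and is thus monotone under passing to subsets, the whole set $\R\setminus\bigcup_{t\in\sigma(S_-)}\Delta_-(t)$ is of positive type, which is (i).

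I do not expect a genuine obstacle here, as all the analytic content is already carried by Theorem \ref{t:new} and Lemma \ref{l:norm}. The only step needing a little care is the bookkeeping that identifies the real section of a union of discs with the union of the intervals $\Delta_\pm$, supplemented by the trivial remark $t\in\Delta_\pm(t)$; it is precisely this remark that upgrades ``$\la$ lies outside all the intervals'' to the two facts ``$\la\in\vrho(S_-)$'' and ``$\la\notin K_{S_-}(M)$'' needed to apply the type part of Theorem \ref{t:new}.
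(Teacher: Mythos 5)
Your proposal is correct and coincides with the paper's own (largely implicit) argument: the paper derives Theorem \ref{t:matrix_gen} exactly by combining the inclusion \eqref{e:incl} and the type statements of Theorem \ref{t:new} with the disc enclosure \eqref{e:besserer} of Lemma \ref{l:norm}, applied to the pairs $(S_-,M)$ and $(S_+,M^*)$. Your additional bookkeeping — the observation that $\Delta_\pm(t)$ is the real trace of $B_{\sqrt{a_\pm+b_\pm t^2}}(t)$, that $t\in\Delta_\pm(t)$ forces $\la\in\vrho(S_\mp)$, and that positive/negative type passes to subsets — is precisely what the paper leaves to the reader, and it is carried out correctly.
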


\begin{rem}
If $M$ is bounded (i.e., $b_\pm=0$ and $a_\pm = \|M\|^2$), Theorem \ref{t:matrix_gen} implies that $\sigma(S)\setminus\R\subset B_{\|M\|}(\sigma(S_-))\cap B_{\|M\|}(\sigma(S_+))$, that $\R\setminus B_{\|M\|}(\sigma(S_-))$ is of positive type and $\R\setminus B_{\|M\|}(\sigma(S_+))$ is of negative type with respect to $S$. This result was already obtained in \cite[Thm.\ 3.5]{bpt} and more general spectral enclosures for $\sigma(S)\setminus\R$ have recently been found in \cite{glmpt}. Analogous methods might also lead to more general enclosures in the relatively bounded case. However, to avoid technical details we shall not touch this topic here.
\end{rem}

For a short discussion of \eqref{e:hyp_int}, assume for simplicity that $a := a_+=a_-$ and $b := b_+=b_-$. Clearly, if, e.g., $\sigma(S_-) = \sigma(S_+) = \R$, the intersection in \eqref{e:hyp_int} does not improve the unions. On the other hand, if, e.g., $S_-$ is bounded from above by $\gamma_-$ and $S_+$ is bounded from below by $\gamma_+$, we obtain from \eqref{e:hyp_int} that
\[
\sigma(S)\setminus\R\,\subset\,\left(\bigcup_{t\le\gamma_-}B_{\sqrt{a+bt^2}}(t)\right)\,\cap\,\left(\bigcup_{t\ge\gamma_+}B_{\sqrt{a+bt^2}}(t)\right).
\]
That is,
\[
\sigma(S)\setminus\R\,\subset\,\bigcup_{t\in [\gamma_+,\gamma_-]}B_{\sqrt{a+bt^2}}(t)
\]
if $\gamma_+\le\gamma_-$ and
\[
\sigma(S)\setminus\R\,\subset\,B_{\sqrt{a+b\gamma_+^2}}(\gamma_+)\,\cap\,B_{\sqrt{a+b\gamma_-^2}}(\gamma_-)
\]
if $\gamma_+ > \gamma_-$. The following corollary treats the case where, in addition, $\gamma_- = \gamma$ and $\gamma_+ = -\gamma$ for some $\gamma\ge 0$, which becomes relevant in the next section.

\begin{cor}\label{c:matrix}
Consider the operator matrix $S$ in \eqref{e:S} with $S_+\ge-\gamma$ and $S_-\le\gamma$ with some $\gamma\ge 0$ and assume that
\[
\|Mf_-\|^2\,\le\,a\|f_-\|^2 + b\|S_-f_-\|^2,\quad f_-\in\dom S_-\subset\dom M,
\]
and
\[
\|M^*f_+\|^2\,\le\,a\|f_+\|^2 + b\|S_+f_+\|^2,\quad f_+\in\dom S_+\subset\dom M^*,
\]
where $a,b\ge 0$, $b < 1$. Then the operator $S$ in \eqref{e:S} is $J$-non-negative over $\ol\C\setminus K$, where
\begin{equation}\label{e:knochen}
K = \bigcup_{t\in [-\gamma,\gamma]} B_{\sqrt{a+bt^2}}(t),
\end{equation}
and is regular at $\infty$.
\end{cor}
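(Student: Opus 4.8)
The plan is to check, in turn, the admissibility of $K$, the three conditions of Definition \ref{d:nnoK}, and regularity at $\infty$; that $S$ is $J$-self-adjoint is already supplied by Theorem \ref{t:new}, since $b<1$ forces the $S_-$-bound of $M$ and the $S_+$-bound of $M^*$ to be less than one. For admissibility: each ball $B_{\sqrt{a+bt^2}}(t)$ is centred on $\R$ and hence invariant under conjugation, so $K=K^*$; the ball for $t=0$ yields $0\in B_{\sqrt a}(0)\subset K$; and $K$ is compact, being a continuous union of discs with continuous radii over the compact interval $[-\gamma,\gamma]$. For simple connectedness of $\C^+\setminus K$ the point is that every vertical line meets $K$ in a single interval symmetric about $\R$: for fixed $x$, the condition $x+iy\in K$ reads $y^2\le a+bt^2-(x-t)^2$ for some $t\in[-\gamma,\gamma]$, and the right-hand side is a downward parabola in $t$, so its maximum $H(x)$ governs the slice $[-\sqrt{H(x)},\sqrt{H(x)}]$ (empty if $H(x)<0$). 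Thus $\C^+\setminus K$ is the region strictly above the graph of the continuous function $x\mapsto\sqrt{\max(H(x),0)}$, which is simply connected.

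Conditions (i) and (ii) follow from Theorem \ref{t:matrix_gen} together with the computation in the discussion preceding this corollary. Since $S_-\le\gamma$ and $S_+\ge-\gamma$, we have $\sigma(S_-)\subset(-\infty,\gamma]$ and $\sigma(S_+)\subset[-\gamma,\infty)$, and because $\gamma_+=-\gamma\le\gamma=\gamma_-$ the intersection in \eqref{e:hyp_int} collapses to $\bigcup_{t\in[-\gamma,\gamma]}B_{\sqrt{a+bt^2}}(t)=K$, giving (i). For (ii) I would use that the endpoint maps $t\mapsto t\pm\sqrt{a+bt^2}$ are strictly increasing, their derivatives $1\pm bt/\sqrt{a+bt^2}$ being bounded below by $1-\sqrt b>0$. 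Hence $\bigcup_{t\le\gamma}\Delta_-(t)=(-\infty,\gamma+\sqrt{a+b\gamma^2}]$, while $K\cap\R=[-(\gamma+\sqrt{a+b\gamma^2}),\,\gamma+\sqrt{a+b\gamma^2}]$, so that $\R^+\setminus K=(\gamma+\sqrt{a+b\gamma^2},\infty)$ sits inside the positive-type set $\R\setminus\bigcup_{t\in\sigma(S_-)}\Delta_-(t)$ of Theorem \ref{t:matrix_gen}(i); thus $(\sigma(S)\cap\R^+)\setminus K\subset\sp(S)$, and the negative half-line is treated symmetrically via Theorem \ref{t:matrix_gen}(ii).

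The resolvent bound (iii) is where I expect the real work to lie, the delicate point being to pair each half-plane with the correct Schur complement. For $b>0$ put $R_0:=\gamma+\sqrt{\gamma^2+a/b}$. Corollary \ref{c:smallerb} applied to $S_-\le\gamma$ and $M$ gives $\|L_-(\la)\|^2\le b$ when $|\la|>R_0$ and $\Re\la\ge0$, whereas the same corollary applied to $-S_+\le\gamma$ and $M^*$ (that is, with $-\la$ in place of $\la$) gives $\|L_+(\la)\|^2\le b$ when $|\la|>R_0$ and $\Re\la\le0$. Either way one of $\|L_\pm(\la)\|^2$ is $\le b<1$, so \eqref{e:resolvent} yields $\|(S-\la)^{-1}\|\le(1+\sqrt b+b)/((1-b)|\Im\la|)$ for all $\la$ with $|\la|>R_0$, $\Im\la\neq0$. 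Taking $K'=B_R(0)\supset K$ with $R\ge R_0$ and using $|\Im\la|\le|\la|\le(1+|\la|)^2$, this is of the form \eqref{e:growth}, indeed with the stronger order-$1$ growth. The case $b=0$ must be handled separately, since Corollary \ref{c:smallerb} requires $b>0$: then $M$ and $M^*$ are bounded by $\sqrt a$, whence $\|L_\pm(\la)\|\le\sqrt a/\dist(\la,\sigma(S_\mp))\to0$ as $|\la|\to\infty$ in the relevant half-plane, so \eqref{e:resolvent} applies verbatim.

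It remains to note regularity at $\infty$. Having shown that $S$ is $J$-non-negative over $\ol\C\setminus K$, I would apply the criterion cited from \cite{bpt}: it suffices to exhibit a uniformly $J$-positive operator $W$ with $W\dom S\subset\dom S$. Here $W:=J$ works, since $J^2=I\ge0$ and $0\in\vrho(J)$ make $J$ uniformly $J$-positive, and $J=\diag(I,-I)$ preserves the domain splitting $\dom S=\dom S_+\oplus\dom S_-$, so that $J\dom S\subset\dom S$. This route bypasses any direct spectral-function estimate at $\infty$, leaving the sign bookkeeping in step (iii) as the only genuinely delicate part of the argument.
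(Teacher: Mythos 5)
Your proposal is correct and follows essentially the same route as the paper's proof: the spectral enclosure and the sign-type statements via Theorem \ref{t:matrix_gen} and the preceding discussion, the order-one resolvent bound by combining Corollary \ref{c:smallerb} (applied to $S_-$ for $\Re\la\ge 0$ and to $-S_+$ for $\Re\la\le 0$) with \eqref{e:resolvent} from Theorem \ref{t:new}, and regularity at $\infty$ via the uniformly $J$-positive operator $W=J$ preserving $\dom S$. The extra points you verify --- admissibility of $K$ (compactness, symmetry, simple connectedness of $\C^+\setminus K$), monotonicity of the endpoint maps $t\mapsto t\pm\sqrt{a+bt^2}$, and the separate treatment of $b=0$, which the paper's condition $|\la|>\gamma+\sqrt{\gamma^2+a/b}$ tacitly excludes --- are details the paper leaves implicit, and your handling of them is sound.
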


\begin{figure}[ht]
\begin{center}
\includegraphics[scale=.25]{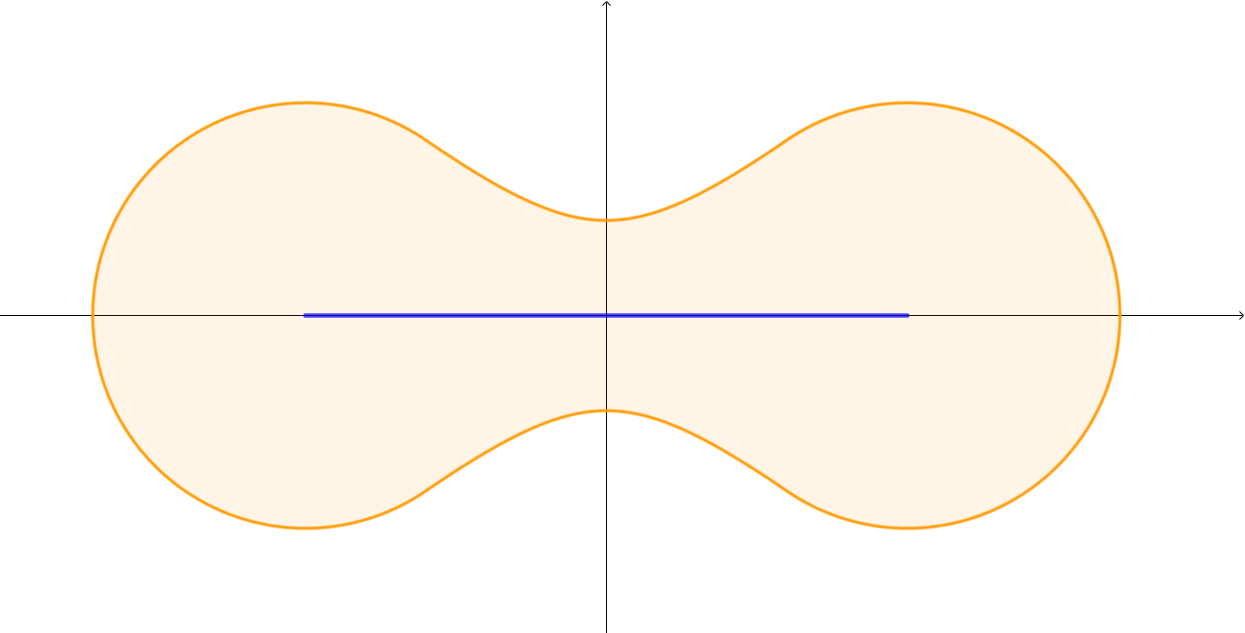}
\end{center}
\caption{The set $K$ in \eqref{e:knochen} with $a = \gamma = 10$ and $b = 0.4$. The blue segment represents the interval $[-\gamma,\gamma]$.}
\end{figure}

\begin{proof}[Proof of Corollary \rmref{c:matrix}]
It follows directly from Theorem \ref{t:matrix_gen} and the preceding discussion that the non-real spectrum of $S$ is contained in $K$. Theorem \ref{t:matrix_gen} also implies that $\R^+\setminus K = (\gamma+\sqrt{a+b\gamma^2},\infty)$ is of positive type and $\R^-\setminus K = (-\infty,-\gamma-\sqrt{a+b\gamma^2})$ is of negative type with respect to $S$.

Let us prove that the growth of the resolvent of $S$ at $\infty$ is of order at most $2$ (cf.\ Definition \ref{d:nnoK} (iii)). In fact, we prove that the order is $1$. For this, let $\la\in\C\setminus\R$ such that $|\la| > \gamma + \sqrt{\gamma^2 + \tfrac ab}$. If $\Re\la\ge 0$, then by Corollary \ref{c:smallerb} we have $\|M(S_--\la)^{-1}\|^2\le b$. Thus, from \eqref{e:resolvent} in Theorem \ref{t:new} we obtain
\[
\|(S-\la)^{-1}\|\,\le\,\frac{3}{|\Im\la|\cdot(1-\|M(S_--\la)^{-1}\|^2)}\,\le\,\frac{3}{1-b}\cdot\frac 1{|\Im\la|}.
\]
A similar reasoning applies to the case where $\Re\la\le 0$.

The regularity of $S$ at $\infty$ follows from \cite[Prop.\ 2.3]{bpt} (see also \cite{cu}) since $J$ is uniformly $J$-positive and leaves $\dom S$ invariant.
\end{proof}

\section{A perturbation result for {\it J}-non-negative operators}
Let $J$ be a self-adjoint involution in the Hilbert space $(\calH,\hproduct)$ inducing the inner product $\product = (J\cdot,\cdot)$ and let $A_0$ be a $J$-non-negative operator in $\calH$ with spectral function $E$. Assume that both $0$ and $\infty$ are not singular critical points of $A_0$ (i.e., $A_0$ is regular) and $0\notin\sigma_p(A_0)$. Then both spectral projections $E_\pm := E(\R^\pm)$ exist, $E_+ + E_- = I$, and $J_0 := E_+ - E_-$ is a bounded and boundedly invertible $J$-non-negative operator which satisfies $J_0^2 = I$. In particular, $J_0$ is both self-adjoint and unitary with respect to the (positive definite) scalar product $\hproduct_0$, where
\[
(f,g)_0 := [J_0f,g],\qquad f,g\in\calH.
\]
By $\|\cdot\|_0$ we denote the norm corresponding to $\hproduct_0$. Since $\|\cdot\|_0$ is equivalent to the original norm, $(\calH,\hproduct_0)$ is a Hilbert space. According to \cite[Lemma 3.8]{bpt} we have
\begin{equation}\label{e:J0}
J_0 = \frac 1\pi\cdot\slim_{n\to\infty}\int_{1/n}^n\Big((A_0 + it)^{-1} + (A_0-it)^{-1}\Big)\,dt,
\end{equation}
where $\slim$ stands for the strong limit. We set
\begin{equation}\label{e:tau_0}
\tau_0 := \|J_0\|.
\end{equation}
Note that from $J_0^2 = I$ it follows that $\tau_0\ge 1$. In what follows, by $T^\ostar$ we denote the adjoint of $T\in L(\calH)$ with respect to the scalar product $\hproduct_0$.

\begin{lem}\label{l:renorm}
Let $T\in L(\calH)$. Then
\[
\|T\|_0\le\tau_0\|T\|.
\]
Moreover for $f\in\calH$, we have
\[
\tau_0^{-1}\|f\|^2\,\le\,\|f\|_0^2\,\le\,\tau_0\|f\|^2
\]
as well as
\[
\|E_\pm f\|_0^2\,\le\,\frac{1+\tau_0}2\,\|f\|^2
\qquad\text{and}\qquad
\|E_\pm f\|^2\,\le\,\frac{1+\tau_0}2\,\|f\|_0^2.
\]
\end{lem}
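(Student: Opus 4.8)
The plan is to route everything through the bounded positive self-adjoint operator $G := JJ_0$, which represents the new scalar product: from $(f,g)_0 = [J_0f,g] = (JJ_0f,g)$ we read off $(f,g)_0 = (Gf,g)$ and $\|f\|_0^2 = (Gf,f)$. Before anything else I would collect the algebraic identities that drive the whole proof. Since $E_\pm = \tfrac12(I\pm J_0)$ and $J_0^2=I$, one has $J_0E_\pm = E_\pm J_0 = \pm E_\pm$; since the spectral projections of a $J$-non-negative operator are $J$-self-adjoint, $E_\pm^* = JE_\pm J$, and hence $E_\pm^*JE_\pm = JE_\pm^2 = JE_\pm$ and $J_0^* = JJ_0J$. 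Multiplying $G=JJ_0$ by $J$ gives $J_0 = JG$ and $J_0^* = GJ$, so that $J_0^*J_0 = GJ\cdot JG = G^2$; this yields $\|G\| = \|J_0\| = \tau_0$, while $G^{-1} = J_0J$ gives $\|G^{-1}\|\le\|J_0\|\,\|J\| = \tau_0$. Consequently $\tau_0^{-1}I\le G\le\tau_0 I$.

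The two middle estimates are then immediate. The norm equivalence is just $\|f\|_0^2 = (Gf,f)$ read against $\tau_0^{-1}I\le G\le\tau_0I$, so I would prove it first; the operator bound $\|T\|_0\le\tau_0\|T\|$ follows by chaining it through $T$, via $\|Tf\|_0^2\le\tau_0\|Tf\|^2\le\tau_0\|T\|^2\|f\|^2\le\tau_0^2\|T\|^2\|f\|_0^2$. For the first inequality of the final block I would use the identities above to collapse one projection:
\[
\|E_+f\|_0^2 = (JJ_0E_+f,E_+f) = (JE_+f,E_+f) = (E_+^*JE_+f,f) = (JE_+f,f)\le\|E_+f\|\,\|f\|\le\|E_+\|\,\|f\|^2,
\]
and $\|E_\pm\| = \|\tfrac12(I\pm J_0)\|\le\tfrac12(1+\tau_0)$ finishes it; the computation for $E_-$ is identical up to the sign in $J_0E_-=-E_-$.

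The real work is the reverse estimate $\|E_\pm f\|^2\le\tfrac{1+\tau_0}{2}\|f\|_0^2$, since the crude chain $\|E_\pm f\|^2\le\tau_0\|E_\pm f\|_0^2\le\tau_0\|f\|_0^2$ overshoots the target constant $\tfrac{1+\tau_0}{2}<\tau_0$. I would split it into (a) the contraction $\|E_\pm f\|_0\le\|f\|_0$, valid because $E_\pm$ are orthogonal projections in $(\calH,(\cdot,\cdot)_0)$ (they are $(\cdot,\cdot)_0$-self-adjoint idempotents), and (b) the restricted bound $\|u\|^2\le\tfrac{1+\tau_0}{2}\|u\|_0^2$ for $u\in\ran E_\pm$; together they give $\|E_+f\|^2\le\tfrac{1+\tau_0}{2}\|E_+f\|_0^2\le\tfrac{1+\tau_0}{2}\|f\|_0^2$. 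The decisive observation for (b) is a rigidity on the range: if $u\in\ran E_\pm$ then $J_0u=\pm u$, so $Gu=JJ_0u=\pm Ju$ and therefore $\|Gu\| = \|Ju\| = \|u\|$, i.e.\ $G$ preserves the norm of $u$, while still $\|u\|_0^2=(Gu,u)$. Letting $\mu$ be the scalar spectral measure of $G$ at $u$, supported in $[\tau_0^{-1},\tau_0]$, the isometry $\|Gu\|^2=\|u\|^2$ becomes $\int(t^2-1)\,d\mu=0$ and the goal becomes $\int\big(1-\tfrac{1+\tau_0}{2}t\big)\,d\mu\le0$.

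This last scalar statement I would settle by an S-procedure: with $\lambda := \tfrac{\tau_0+2}{2(\tau_0+1)}>0$ the convex parabola $p(t) := 1-\tfrac{1+\tau_0}{2}t+\lambda(t^2-1)$ satisfies $p(\tau_0)=0$ and $p(\tau_0^{-1}) = -(\tau_0-1)/\tau_0^2\le0$, hence $p\le0$ on all of $[\tau_0^{-1},\tau_0]$; integrating against $\mu$ and using $\int(t^2-1)\,d\mu=0$ gives $\int\big(1-\tfrac{1+\tau_0}{2}t\big)\,d\mu=\int p\,d\mu\le0$, which is (b). I expect the main obstacle to be exactly the identification of this extra constraint $\|Gu\|=\|u\|$ on $\ran E_\pm$: knowing only $\sigma(G)\subset[\tau_0^{-1},\tau_0]$ one is stuck with the constant $\tau_0$, and it is the norm-preservation of $G$ on the eigenspaces of $J_0$ that sharpens the bound to $\tfrac{1+\tau_0}{2}$. (The degenerate case $\tau_0=1$, where $G=I$ and $J_0=J$, is trivial and handled separately.)
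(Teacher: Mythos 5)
Your proof is correct, and for the hardest inequality, $\|E_\pm f\|^2\le\tfrac{1+\tau_0}{2}\|f\|_0^2$, it takes a genuinely different route from the paper. Both arguments reduce, via the $\hproduct_0$-contractivity of $E_\pm$, to the lower bound $\|u\|_0^2\ge\tfrac{2}{1+\tau_0}\|u\|^2$ for $u\in\calH_\pm=\ran E_\pm$, but the paper gets this bound by citing an external structural lemma (\cite[Lemma 3.9]{bpt}): the compression $P_\pm J|_{\calH_\pm}$ of $J$ to $\calH_\pm$ (with $P_\pm$ the $\hproduct$-orthogonal projection) is the \emph{inverse} of $E_\pm J|_{\calH_\pm}$, whence $\min\sigma(P_+J|_{\calH_+})\ge\|E_+\|^{-1}\ge\tfrac{2}{1+\tau_0}$ and $\|u\|_0^2=(P_+Ju,u)$ is bounded below. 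You instead work with the Gram operator $G=JJ_0$, establish $\tau_0^{-1}I\le G\le\tau_0 I$ (your identities $\|G\|=\|J_0\|$ via $J_0^*J_0=G^2$ and $G^{-1}=J_0J$ are correct), observe the rigidity $Gu=\pm Ju$ and hence $\|Gu\|=\|u\|$ on $\ran E_\pm$ --- note this is exactly the quantity the paper bounds, since $\|u\|_0^2=(Gu,u)=(Ju,u)$ for $u\in\calH_+$ --- and then settle the scalar problem by an S-procedure: your choice $\lambda=\tfrac{\tau_0+2}{2(\tau_0+1)}$ indeed gives $p(\tau_0)=0$, $p(\tau_0^{-1})=-(\tau_0-1)/\tau_0^2\le 0$, and convexity forces $p\le 0$ on $[\tau_0^{-1},\tau_0]$, so integrating against the spectral measure with the constraint $\int(t^2-1)\,d\mu=0$ yields precisely the constant $\tfrac{1+\tau_0}{2}$. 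What each approach buys: the paper's is shorter \emph{given} the quoted lemma but not self-contained; yours is self-contained modulo the spectral theorem and makes transparent why the constant improves from the crude $\tau_0$ to $\tfrac{1+\tau_0}{2}$ (it is the isometry constraint on the eigenspaces of $J_0$, exactly as you flag). Your treatment of the easier claims is also a mild streamlining: you prove the norm equivalence first directly from $\tau_0^{-1}I\le G\le\tau_0 I$ and deduce $\|T\|_0\le\tau_0\|T\|$ by a trivial chain, whereas the paper proves the operator bound first via a numerical-range argument and $T^\ostar=J_0JT^*JJ_0$, and your computation $\|E_\pm f\|_0^2=\pm(JE_\pm f,f)\le\|E_\pm\|\|f\|^2$ is essentially the paper's. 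All auxiliary identities you invoke ($E_\pm^*=JE_\pm J$, $J_0$ being $\hproduct_0$-self-adjoint, $E_\pm=\tfrac12(I\pm J_0)$) are available in the paper's setting.
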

\begin{proof}
Since the spectrum of a bounded operator is contained in the closure of its numerical range, we have
\[
\|T\|_0^2 = \|T^\ostar T\|_0 = \sup\{|\la| : \la\in\sigma(T^\ostar T)\}\,\le\,\sup\{|(T^\ostar Tf,f)| : f\in\calH,\,\|f\|=1\}\,\le\,\|T^\ostar\|\|T\|.
\]
Hence, from $T^\ostar = J_0JT^*JJ_0$ we obtain $\|T^\ostar\|\le\|J_0\|^2\|T\| = \tau_0^2\|T\|$.

Let now $f\in\calH$. Then we immediately obtain $\|f\|_0^2 = [J_0f,f]\le\|J_0\|\|f\|^2 = \tau_0\|f\|^2$ and, by the first claim,
\[
\|f\|^2 = [f,Jf] = (J_0f,Jf)_0\le\|J\|_0\|f\|_0^2\,\le\,\tau_0\|f\|_0^2.
\]
Moreover,
\[
\|E_{\pm}f\|_0^2 = (E_\pm f,f)_0 = \frac 12(f\pm J_0f,f)_0 = \frac 12[J_0f\pm f,f] = \frac 12(J_0f\pm f,Jf).
\]
Hence, $\|E_{\pm}f\|_0^2\le\tfrac 12\|I\pm J_0\|\|f\|^2\le\frac{1+\tau_0}{2}\|f\|^2$.

Let $P_\pm$ denote the orthogonal projection onto $\calH_\pm := E_\pm\calH$ with respect to the scalar product $\hproduct$. By \cite[Lemma 3.9]{bpt} we have $P_\pm J|_{\calH_\pm} = (E_\pm J|_{\calH_\pm})^{-1}$. Note that $P_+ J|_{\calH_+}$ is a non-negative self-adjoint operator in $(\calH_+,\hproduct)$. Hence, we have
\[
\min\sigma(P_+ J|_{\calH_+}) = \big[\max\sigma(E_+ J|_{\calH_+})\big]^{-1} = \|E_+ J|_{\calH_+}\|^{-1}\,\ge\,\|E_+\|^{-1} = \|\tfrac 12(I+ J_0)\|^{-1}\,\ge\,\frac 2{1+\tau_0}.
\]
Therefore, for $f_+\in\calH_+$,
\[
\|f_+\|_0^2 = [f_+,f_+] = (Jf_+,f_+) = (P_+ Jf_+,f_+)\,\ge\,\frac 2{1+\tau_0}\|f_+\|^2,
\]
and consequently, $\|E_+f\|^2\le\tfrac{1+\tau_0}{2}\|E_+f\|_0^2\le\tfrac{1+\tau_0}{2}\|f\|_0^2$. A similar reasoning applies to $\|E_-f\|^2$.
\end{proof}

We can now state and prove the main theorem in this section.

\begin{thm}\label{t:main}
Let $A_0$ be a regular $J$-non-negative operator in $\calH$ with $0\notin\sigma_p(A_0)$ and let $\tau\ge\tau_0$, where $\tau_0$ is as in \eqref{e:tau_0}. Furthermore, let $V$ be a $J$-symmetric operator in $\calH$ with $\dom A_0\subset\dom V$ such that
\[
(1+\tau)\tau\|Vf\|^2\,\le\,2a\|f\|^2 + b\|A_0f\|^2,\quad f\in\dom A_0,
\]
where $a,b\ge 0$, $b < 1$. Then the operator $A := A_0+V$ is $J$-self-adjoint.

Let $\nu := \inf\{(JVf,f) : f\in\dom V,\,\|f\|=1\}\in [-\infty,\infty)$. If $v\ge 0$, then $A$ is a $J$-non-negative operator. Otherwise, the operator $A$ is $J$-non-negative over
\begin{equation}\label{e:worse}
\ol\C\setminus\bigcup_{t\in[-\gamma,\gamma]}B_{\sqrt{a + bt^2}}(t),
\end{equation}
where\footnote{If $\nu = -\infty$, we set $\gamma= \sqrt{\tfrac{1+\tau}{2\tau}a}$.} $\gamma := \min\{\sqrt{\tfrac{1+\tau}{2\tau}a},-\tfrac{1+\tau}{2}v\}$. In both cases the operator $A$ is regular at $\infty$. If $b < \tfrac{\tau-1}{2\tau}$, then $A$ is $J$-non-negative over
\begin{equation}\label{e:better}
\ol\C\setminus\bigcup_{t\in[-\gamma,\gamma]}B_{\sqrt{\frac{1+\tau}{2\tau(1-b)}(a+bt^2)}}(t).
\end{equation}
\end{thm}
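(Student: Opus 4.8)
The plan is to reduce the statement to the block-matrix Corollary \ref{c:matrix} after renormalizing so that $A_0$ becomes block diagonal. First, the hypothesis rewrites as $\|Vf\|^2\le\frac{2a}{(1+\tau)\tau}\|f\|^2+\frac{b}{(1+\tau)\tau}\|A_0f\|^2$, so the $A_0$-bound of $V$ is at most $\sqrt{b/((1+\tau)\tau)}<1$; since $JA_0$ is self-adjoint and $JV$ symmetric with the same bound, Kato--Rellich makes $JA=JA_0+JV$ self-adjoint, i.e.\ $A$ is $J$-self-adjoint. I then pass to the Hilbert space $(\calH,\hproduct_0)$: by Remark \ref{r:J-indep} the notion of $J$-non-negativity over $\ol\C\setminus K$ is unaffected, $J_0=E_+-E_-$ is the fundamental symmetry, the subspaces $\calH_\pm:=E_\pm\calH$ are mutually $\hproduct_0$-orthogonal, and regularity of $A_0$ together with $0\notin\sigma_p(A_0)$ forces $A_0=\diag(S_+,S_-)$ with $S_+\ge0$ and $S_-\le0$ self-adjoint in $\calH_\pm$. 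Since $V$ is $J_0$-symmetric its blocks satisfy $V_{-+}=-V_{+-}^{\ostar}$, whence, with $M:=E_+V|_{\calH_-}$,
\[
A=\mat{\tilde S_+}{M}{-M^{\ostar}}{\tilde S_-},\qquad \tilde S_\pm:=S_\pm+E_\pm V|_{\calH_\pm},
\]
is exactly of the form \eqref{e:S}.

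Next I translate the relative bound into the $\hproduct_0$-norm. Lemma \ref{l:renorm} gives $\|E_\pm g\|_0^2\le\frac{1+\tau}{2}\|g\|^2$, so for $f_-\in\dom S_-$ (using $A_0f_-=S_-f_-$)
\[
\|Mf_-\|_0^2\le\tfrac{1+\tau}{2}\|Vf_-\|^2\le\tfrac{1}{2\tau}\bigl(2a\|f_-\|^2+b\|S_-f_-\|^2\bigr),
\]
and the factor $(1+\tau)\tau$ built into the hypothesis is precisely what makes the projection loss $\frac{1+\tau}{2}$ and the conversion of norms collapse cleanly. Converting $\|f_-\|,\|S_-f_-\|$ to $\|\cdot\|_0$ crudely yields the coefficients $(a,b)$ of the worse enclosure \eqref{e:worse}; resolving the implicit dependence of $\|S_-f_-\|_0$ on $\|\tilde S_-f_-\|_0$ through $S_-=\tilde S_--E_-V$ --- a self-improving estimate producing the factor $(1-b)^{-1}$ --- yields the sharper coefficients $\frac{1+\tau}{2\tau(1-b)}(a,b)$, which beat $(a,b)$ exactly when $b<\frac{\tau-1}{2\tau}$ and give \eqref{e:better}. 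The identical estimates hold for $M^{\ostar}$ relative to $\tilde S_+$, so both off-diagonal relative bounds are $<1$, $A$ is diagonally dominant, and Corollary \ref{c:matrix} applies.

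It remains to produce the interval $[-\gamma,\gamma]$. Using $J_0f_\pm=\pm f_\pm$ one checks $(Vf_+,f_+)_0=[Vf_+,f_+]$ and $(Vf_-,f_-)_0=-[Vf_-,f_-]$; combined with $[Vf,f]\ge v\|f\|^2$, with $(S_+f_+,f_+)_0\ge0$, $(S_-f_-,f_-)_0\le0$, and with $\|f_\pm\|^2\le\frac{1+\tau}{2}\|f_\pm\|_0^2$, this gives $\tilde S_+\ge\frac{1+\tau}{2}v$ and $\tilde S_-\le-\frac{1+\tau}{2}v$, i.e.\ the value $\gamma=-\frac{1+\tau}{2}v$; the alternative $\gamma=\sqrt{\frac{1+\tau}{2\tau}a}$ originates from the $a$-part of the relative bound, and one takes the minimum. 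If $v\ge0$ then $[Af,f]=[A_0f,f]+[Vf,f]\ge0$, so $JA\ge0$, and $A$ is $J$-non-negative because \eqref{e:resolvent} places large non-real $\la$ in $\vrho(A)$. For $v<0$, feeding $\gamma$ and the coefficients above into Corollary \ref{c:matrix} yields $J$-non-negativity over \eqref{e:worse}, resp.\ \eqref{e:better}. In both cases regularity at $\infty$ follows as in Corollary \ref{c:matrix}, since the uniformly $J$-positive involution $J_0$ leaves $\dom A=\dom A_0$ invariant (cf.\ \cite[Prop.\ 2.3]{bpt}).

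The hardest part is the constant bookkeeping in the relative bounds: the term $b\|A_0f\|^2$ has to be re-expressed relative to the \emph{perturbed} diagonal entries $\tilde S_\pm$ measured in $\|\cdot\|_0$, and carrying this out sharply --- resolving the circular dependence between $\|A_0f\|$ and $\|\tilde S_\pm f\|_0$ --- is exactly what separates the two regimes and pins down the threshold $b<\frac{\tau-1}{2\tau}$. A secondary obstacle is justifying the $a$-based value of $\gamma$ and verifying that, after renormalization, the semi-bounds and the relative-bound coefficients are mutually consistent, so that Corollary \ref{c:matrix} delivers the stated bone-shaped set with $0\in K$.
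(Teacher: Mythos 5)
Your proposal is correct and follows essentially the same route as the paper's proof: Kato--Rellich for $J$-self-adjointness, renormalization to $\hproduct_0$ with $J_0=E_+-E_-$, the block decomposition $A=\smallmat{A_++V_+}{M}{-M^\ostar}{A_-+V_-}$, translation of the relative bound via Lemma \ref{l:renorm} with the absorption (``self-improving'') step producing the factor $(1-b)^{-1}$ and the threshold $b<\tfrac{\tau-1}{2\tau}$, the $v$-based semibounds, and Corollary \ref{c:matrix} together with a uniformly $J$-positive operator preserving $\dom A=\dom A_0$ for regularity at $\infty$. The one step you flag but leave implicit --- the $a$-based semibound $\gamma_1=\sqrt{\tfrac{1+\tau}{2\tau}a}$ for $A_\pm+V_\pm$ --- is obtained in the paper exactly as you suggest, by applying Corollary \ref{c:improvement} to the diagonal perturbations $V_\pm$, which are $A_\pm$-bounded with constants $\tfrac{1+\tau}{2\tau}a$ and $\tfrac{1+\tau}{4\tau}b$.
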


\begin{rem}
If $b < \tfrac{\tau-1}{2\tau}$, then the bound in \eqref{e:better} is better than that in \eqref{e:worse}. If $V$ is bounded (i.e., $b=0$ and $a = \tfrac{(1+\tau)\tau}{2}\|V\|^2$), we obtain from the second part of Theorem \ref{t:main} that $A_0+V$ is $J$-non-negative over
\[
\ol\C\setminus B_{r}([-d,d]),
\]
where $r = \tfrac{1+\tau}{2}\|V\|$ and $d = -\tfrac{1+\tau}{2}\min\sigma(JV)$, which is the same result as \cite[Thm.\ 3.1]{bpt}.
\end{rem}

The following example (cf.\ \cite[Example 3.2]{bpt}) shows that the assumption on the regularity of $A_0$  in Theorem \ref{t:main} cannot be dropped.

\begin{ex}\label{ex:ex}
Let $(\calK,(\cdot,\cdot))$ be a Hilbert space and let $H$ be an unbounded selfadjoint operator in $\calK$ such that $\sigma(H)\subset 
(0,\infty)$. 
Consider the following operators in $\calH := \calK\oplus\calK$:
\begin{equation*}
J=\begin{pmatrix} 0 & I \\ I & 0\end{pmatrix},\quad
A_0=\begin{pmatrix} 0 & I \\ H & 0\end{pmatrix},\quad 
V=\begin{pmatrix} 0 & -I  \\ 0 & 0\end{pmatrix},\quad\text{and}\quad
A_0+V =\begin{pmatrix} 0 & 0 \\ H & 0\end{pmatrix}.
\end{equation*}
It is easy to see that $A_0$ is a $J$-non-negative operator and $V$ is a bounded $J$-selfadjoint operator. Moreover, as $\dom(A_0+V)=\dom A_0 = \dom H\oplus{\calK}$ we conclude $\ran(A_0+V-\lambda)\not=\calH$ for every $\lambda\in\C$, that is, $\sigma(A_0+V)=\C$.
\end{ex}

\begin{proof}[Proof of Theorem \rmref{t:main}]
Without loss of generality we assume that $\dom V = \dom A_0$. As $\tau\ge\tau_0\ge 1$, it follows from the Kato-Rellich theorem that $A = A_0 + V$ is $J$-self-adjoint. Equivalently, $A$ is $J_0$-self-adjoint in $(\calH,\hproduct_0)$.

Let $\oplus_0$ denote the $\hproduct_0$-orthogonal sum of subspaces. Then we have $\calH = \calH_+\oplus_0 \calH_-$ and $\dom A_0 = D_+\oplus_0 D_-$, where $\calH_\pm := (E_\pm\calH,\hproduct_0)$ and $D_\pm = \dom A_0\cap\calH_\pm$. Hence, $V$ is defined on both $D_+$ and $D_-$ and we can write $A_0$ and $V$ as operator matrices
\[
A_0 = \mat{A_+}{0}{0}{A_-}
\qquad\text{and}\qquad
V = \mat{V_+}{M}{N}{V_-},
\]
where
\[
A_\pm = E_\pm(A|_{D_\pm}), \quad V_\pm = E_\pm(V|_{D_\pm}), \quad M = E_+(V|_{D_-}), \quad\text{and}\quad N = E_-(V|_{D_+}).
\]
Note that $A_0$ is $J_0$-non-negative in $(\calH,\hproduct_0)$ which implies that $\pm A_\pm\ge 0$ in $\calH_\pm$. Moreover, since $V$ is $J$-symmetric (and hence $J_0$-symmetric in $(\calH,\hproduct_0)$), we have that $N\subset -M^\ostar$. Hence,
\[
A = \mat{A_++V_+}{M}{-M^\ostar}{A_-+V_-},\qquad\dom A = D_+\oplus_0 D_-.
\]
In particular, $A_\pm + V_\pm$ is a self-adjoint operator in $\calH_\pm$. For $f_\pm\in D_\pm$, we obtain from Lemma \ref{l:renorm} that
\[
\|V_\pm f_\pm\|_0^2 = \|E_\pm Vf_\pm\|_0^2\,\le\,\tfrac{1+\tau_0}{2}\|Vf_\pm\|^2\,\le\,\tfrac a{\tau}\|f_\pm\|^2 + \tfrac b{2\tau}\|A_\pm f_\pm\|^2\,\le\,\tfrac{1+\tau}{2\tau}\big(a\|f_\pm\|_0^2 + \tfrac{b}2\|A_\pm f_\pm\|_0^2\big).
\]
Hence, $V_\pm$ is $A_\pm$-bounded (in $\calH_\pm$) with bounds $\tfrac{1+\tau}{2\tau}a$ and $\tfrac{1+\tau}{4\tau}b$. As $\tfrac{1+\tau}{4\tau}b < 1$, Corollary \ref{c:improvement} implies that the operator $A_-+V_-$ is bounded from above by $\gamma_1 := \sqrt{\tfrac{1+\tau}{2\tau}a}$ and $A_++V_+$ is bounded from below by $-\gamma_1$ (both with respect to $\hproduct_0$). Now, for $f_-\in D_-$ we have
\begin{align*}
\|Mf_-\|_0^2 + \|V_-f_-\|_0^2
&= \|Vf_-\|_0^2\,\le\,\tau_0\|Vf_-\|^2\,\le\,\tfrac{2}{1+\tau}\big(a\|f_-\|^2 + \tfrac{b}2\|A_-f_-\|^2\big)\,\le\,a\|f_-\|_0^2 + \tfrac{b}2\|A_-f_-\|_0^2\\
&\le\,a\|f_-\|_0^2 + b\cdot\big(\|(A_-+V_-)f_-\|_0^2 + \|V_-f_-\|_0^2\big).
\end{align*}
Since $b < 1$, it follows that
\begin{align*}
\|Mf_-\|_0^2\,\le\,a\|f_-\|_0^2 + b\|(A_-+V_-)f_-\|_0^2.
\end{align*}
Similarly, for $f_+\in D_+$ one obtains
\[
\|M^\ostar f_+\|_0^2\,\le\,a\|f_+\|_0^2 + b\|(A_++V_+)f_+\|_0^2.
\]
As seen above, $A_++V_+$ is bounded from below by $-\gamma_1$ and $A_-+V_-$ is bounded from above by $\gamma_1$. Corollary \ref{c:matrix} and Remark \ref{r:J-indep} thus imply that the operator $A$ is $J$-non-negative over $\ol\C\setminus K_1$, where
\[
K_1 = \bigcup_{t\in[-\gamma_1,\gamma_1]}B_{\sqrt{a + bt^2}}(t).
\]
In particular, $\vrho(A)\neq\emptyset$. Hence, if the lower bound $v$ of $JV$ is non-negative, the operator $A$ is $J$-non-negative. Assume that $v < 0$. Then for $f_+\in D_+$ we have
\begin{align*}
((A_++V_+)f_+,f_+)_0
&\ge\,(V_+f_+,f_+)_0 = (Vf_+,E_+f_+)_0 = (Vf_+,J_0f_+)_0\\
&= [Vf_+,f_+] = (JVf_+,f_+)\ge v\|f_+\|^2\,\ge\,\tfrac {1+\tau}2\, v\|f_+\|_0^2,
\end{align*}
and similarly $((A_-+V_-)f_-,f_-)_0\le -\tfrac{1+\tau}{2}v\|f_-\|_0^2$ for $f_-\in D_-$. Thus, $A$ is also $J$-non-negative over $\ol\C\setminus K_2$, where
\[
K_2 = \bigcup_{t\in[-\gamma_2,\gamma_2]}B_{\sqrt{a + bt^2}}(t)
\]
and $\gamma_2 = -\tfrac{1+\tau}{2}v$. This finishes the proof of the first part of the theorem. Since $A_0$ is regular at $\infty$, there exists a uniformly $J$-positive operator $W$ in $\calH$ such that $W\dom A_0\subset\dom A_0$ (see \cite{cu}). From $\dom A = \dom A_0$ it follows that $A$ is regular at $\infty$.

For the second part of the theorem, let $f_-\in D_-$. Then we have
\[
\|V_-f_-\|^2 = \|E_-Vf_-\|^2\,\le\,\tfrac{1+\tau}2\|Vf_-\|_0^2\,\le\,\tfrac{(1+\tau)\tau}2\|Vf_-\|^2.
\]
Therefore,
\begin{align*}
\tfrac{1+\tau}{2}\|Vf_-\|^2
&\le\,\tfrac{a}{\tau}\|f_-\|^2 + \tfrac b{2\tau}\|A_-f_-\|^2\,\le\,\tfrac{a}{\tau}\|f_-\|^2 + \tfrac b{\tau}\|(A_-+V_-)f_-\|^2 + \tfrac b{\tau}\|V_-f_-\|^2\\
&\le\left(\tfrac{(1+\tau)a}{2\tau}\|f_-\|_0^2 + \tfrac{(1+\tau)b}{2\tau}\|(A_-+V_-)f_-\|_0^2\right) + \tfrac{1+\tau}2b\|Vf_-\|^2,
\end{align*}
which implies
\[
\|Mf_-\|_0^2 = \|E_+Vf_-\|_0^2\le\tfrac{1+\tau}2\|Vf_-\|^2\,\le\,\tfrac{(1+\tau)a}{2\tau(1-b)}\|f_-\|_0^2 + \tfrac{(1+\tau)b}{2\tau(1-b)}\|(A_-+V_-)f_-\|_0^2.
\]
Similarly, for $f_+\in D_+$ we have
\[
\|M^\ostar f_+\|_0^2\,\le\,\tfrac{(1+\tau)a}{2\tau(1-b)}\|f_+\|_0^2 + \tfrac{(1+\tau)b}{2\tau(1-b)}\|(A_++V_+)f_+\|_0^2.
\]
We apply Corollary \ref{c:matrix} and conclude that the claim holds if only $\tfrac{(1+\tau)b}{2\tau(1-b)} < 1$, which is equivalent to $b < \tfrac{2\tau}{1+3\tau}$. But since the first part of the theorem yields better bounds for $\tfrac{\tau-1}{2\tau}\le b < \tfrac{2\tau}{1+3\tau}$ (e.g., $b=\tfrac 12$), we require that $b < \tfrac{\tau-1}{2\tau}$ 
\end{proof}

Since the $A_0$-bound of an $A_0$-compact operator is always zero (see \cite[Corollary III.7.7]{ee}), the next corollary immediately follows from Theorem \ref{t:main}.

\begin{cor}
Let $A_0$ be a regular $J$-non-negative operator in $\calH$ with $0\notin\sigma_p(A_0)$ and let $V$ be a $J$-symmetric operator in $\calH$ which is $A_0$-compact. Then the operator $A_0+V$ is $J$-self-adjoint, $J$-non-negative over $\ol\C\setminus K$ for some $K$, and regular at $\infty$.
\end{cor}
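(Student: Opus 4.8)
The plan is simply to read the statement off from Theorem \ref{t:main}, the only thing needing verification being that $A_0$-compactness of $V$ supplies the relative-boundedness hypothesis of that theorem with $b<1$ --- in fact with $b$ as small as we please. Two consequences of $A_0$-compactness are needed: by definition $\dom A_0\subset\dom V$, and by \cite[Corollary III.7.7]{ee} the $A_0$-bound of $V$ vanishes, so that for every $\veps>0$ there is a constant $C_\veps\ge 0$ with
$$
\|Vf\|\,\le\,\veps\|A_0f\| + C_\veps\|f\|,\qquad f\in\dom A_0.
$$

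Next I would bring this into the exact shape required by Theorem \ref{t:main}. Fixing $\tau:=\tau_0$, squaring the inequality above and using $(x+y)^2\le 2x^2+2y^2$ gives
$$
(1+\tau)\tau\|Vf\|^2\,\le\,2(1+\tau)\tau\,\veps^2\|A_0f\|^2 + 2(1+\tau)\tau\,C_\veps^2\|f\|^2,\qquad f\in\dom A_0.
$$
Choosing $\veps$ small enough that $b:=2(1+\tau)\tau\,\veps^2<1$ and putting $a:=(1+\tau)\tau\,C_\veps^2$, this reads
$$
(1+\tau)\tau\|Vf\|^2\,\le\,2a\|f\|^2 + b\|A_0f\|^2,\qquad f\in\dom A_0,
$$
with $a,b\ge 0$ and $b<1$, which is precisely the hypothesis of Theorem \ref{t:main}.

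Theorem \ref{t:main} now applies and yields all three assertions at once. It gives that $A_0+V$ is $J$-self-adjoint and regular at $\infty$; moreover, if the lower bound $v$ of $JV$ is negative it states directly that $A_0+V$ is $J$-non-negative over $\ol\C\setminus K$ with $K$ the set in \eqref{e:worse}, while if $v\ge 0$ it gives the stronger conclusion that $A_0+V$ is $J$-non-negative, which in particular is $J$-non-negative over $\ol\C\setminus K$ for any admissible $K$ (its non-real spectrum being empty). Either way the corollary follows.

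I do not expect a genuine obstacle here: the whole content is the cited fact that $A_0$-compactness forces relative bound zero, and it is exactly this that makes the smallness condition $b<1$ automatic. The only point requiring a line of care is the passage from the non-squared relative-bound inequality to the squared one used in Theorem \ref{t:main}, together with the bookkeeping that absorbs the factor $(1+\tau)\tau$ while keeping $b<1$; since $\veps$ may be taken arbitrarily small, this causes no difficulty.
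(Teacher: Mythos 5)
Your proposal is correct and is exactly the paper's argument: the corollary is stated there as an immediate consequence of Theorem \ref{t:main} together with the cited fact \cite[Corollary III.7.7]{ee} that an $A_0$-compact operator has $A_0$-bound zero. Your extra bookkeeping (squaring the relative-bound inequality and choosing $\veps$ so that $b = 2(1+\tau)\tau\,\veps^2 < 1$) just makes explicit the step the paper leaves implicit, and it is carried out correctly.
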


\section{An application to singular indefinite Sturm-Liouville operators}\label{s:appl}
In this section we apply the abstract Theorem \ref{t:main} to indefinite Sturm-Liouville operators arising from differential expressions of the form
\[
\calL(f)(x) = \sgn(x)\big(-f''(x) + q(x)f(x)\big),\quad x\in\R\,,
\]
with a real-valued potential $q\in L^p(\R)$, $2\le p\le\infty$. 
The maximal domain corresponding to $\calL$ is given by
\[
D_{\max} = \big\{f\in L^2(\R) : f,f'\in\AC_{\rm loc}(\R),\,-f''+qf\in L^2(\R)\big\},
\]
where $\AC_{\rm loc}(\R)$ stands for the space of functions $f : \R\to\C$ which are locally absolutely continuous. The maximal operator $A$ associated with $\calL$ is defined as
\begin{equation}\label{e:SLA}
Af := \calL(f),\qquad\dom A := D_{\max}.
\end{equation}
We also define the corresponding {\em definite} Sturm-Liouville operator $T$ by
\[
Tf := JAf = -f'' + qf,\qquad \dom T := D_{\max},
\]
where $J : L^2(\R)\to L^2(\R)$ is the operator of multiplication with the function $\sgn(x)$. Note that $J = J^{-1} = J^*$ and hence $J$ is a fundamental symmetry.

If $q\in L^p(\R)$, $2\le p\le\infty$, it is well known that the operator $T$ is self-adjoint and bounded from below. In fact, this holds for much more general differential expressions (cf.\ \cite[Thm.\ 1.1]{bst}). Consequently, the operator $A = JT$ is $J$-self-adjoint and it was shown in \cite[Thm.\ 4.2]{bp} that $A$ is non-negative over $\ol\C\setminus K$ for some compact $K = K^*$. In particular, the non-real spectrum of $A$ is contained in $K$. It is also known (cf.\ \cite{bp}) that the non-real spectrum of $A$ consists of isolated eigenvalues which can only accummulate to $[\mu,-\mu]$, where $\mu$ is the infimum of the essential spectrum of $T$. However, the set $K$ could not explicitly be specified in \cite{bp}.

Recently, in \cite{bst} spectral enclosures for the non-real spectrum have been found: $\sigma(A)\setminus\R\subset K'$. However, this does not mean that the operator $A$ is non-negative over $\ol\C\setminus K'$ for $K'$ might be much smaller than $K$. In Theorem \ref{t:SL} below, we provide explicit bounds on the compact set $K$, thereby improving the bounds from \cite{bst} if $q\le 0$.

The following lemma sets the ground for the applicability of Theorem \ref{t:main} to the problem. In its proof we make use of the Fourier transform. Here, we use the definition
\[
\calF f(\omega) = \hat f(\omega) = \int_\R f(x)e^{-2\pi ix\omega}\,dx,\qquad \omega\in\R,
\]
for Schwartz functions $f$. The Fourier transform $\calF$ is extended to a unitary operator in $L^2(\R)$ in the usual way. By $H^m(\R)$ we denote the Sobolev space of regularity order $m > 0$ on $\R$, i.e.,
\[
H^m(\R) = \big\{f\in L^2(\R) : (1+\omega^2)^{m/2}\hat f\in L^2(\R)\big\}.
\]
Recall that for $f\in H^1(\R)$ we have $\wh{f'}(\omega) = 2\pi i\omega\hat f(\omega)$ and thus, for $f\in H^2(\R)$, $\wh{f''}(\omega) = -4\pi^2\omega^2\hat f(\omega)$.

\begin{lem}\label{l:Ls}
Let $p\in [2,\infty]$. Then for any $g\in L^p(\R)$ and any $f\in H^2(\R)$ we have $fg\in L^2(\R)$ and for any $r > 0$ the following inequality holds:
\[
\|fg\|_2\,\le\,(2r)^{1/p}\left(\|f\|_2 + \frac{1}{2\sqrt{3}\pi^2pr^2}\,\|f''\|_2\right)\|g\|_p.
\]
\end{lem}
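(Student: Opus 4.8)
The plan is to dominate $\|fg\|_2$ by a product that peels off $\|g\|_p$, and then to concentrate all the analytic work on the single scalar quantity $\|f\|_\infty$, which the Fourier transform controls cleanly.

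First I would apply H\"older's inequality in the form $\|fg\|_2^2=\int_\R|f|^2|g|^2\le\||f|^2\|_{p/(p-2)}\,\||g|^2\|_{p/2}$. This simultaneously shows $fg\in L^2(\R)$ and gives
$$
\|fg\|_2\le\|f\|_q\,\|g\|_p,\qquad q:=\tfrac{2p}{p-2}\in[2,\infty],
$$
with the endpoints $q=\infty$ (at $p=2$) and $q=2$ (at $p=\infty$) read off directly. Since $H^2(\R)\hookrightarrow L^q(\R)$ the right-hand side is finite, so the entire task reduces to bounding $\|f\|_q$ by $(2r)^{1/p}\big(\|f\|_2+\tfrac{1}{2\sqrt3\pi^2pr^2}\|f''\|_2\big)$.

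Next I would interpolate: by log-convexity of the $L^s$-norms one has $\|f\|_q\le\|f\|_2^{1-2/p}\|f\|_\infty^{2/p}$, the exponent $2/p$ coming from $\tfrac1q=\tfrac{1-\theta}2$ with $\theta=2/p$. This reduces matters to an $L^\infty$-estimate carrying the right $r$-dependence. Here the Fourier transform enters: since $\hat f\in L^1(\R)$ for $f\in H^2(\R)$, inversion yields $\|f\|_\infty\le\|\hat f\|_1$, and I would split the frequency integral at $|\omega|=r$. On $|\omega|\le r$, Cauchy--Schwarz together with Plancherel gives $\int_{|\omega|\le r}|\hat f|\le(2r)^{1/2}\|f\|_2$; on $|\omega|>r$, writing $\hat f=\wh{f''}/(-4\pi^2\omega^2)$ and using $\int_{|\omega|>r}\omega^{-4}\,d\omega=\tfrac{2}{3r^3}$ gives $\int_{|\omega|>r}|\hat f|\le\tfrac{1}{4\pi^2}\sqrt{2/(3r^3)}\,\|f''\|_2$. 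Factoring out $(2r)^{1/2}$ produces
$$
\|f\|_\infty\le(2r)^{1/2}\Big(\|f\|_2+\tfrac{1}{4\sqrt3\pi^2r^2}\|f''\|_2\Big),
$$
where the $\sqrt3$ is exactly the $3$ from integrating $\omega^{-4}$.

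Finally I would combine the two estimates. Raising the last display to the power $2/p$, multiplying by $\|f\|_2^{1-2/p}$, and applying the weighted AM--GM inequality $x^{1-\theta}y^{\theta}\le(1-\theta)x+\theta y$ with $\theta=2/p$, $x=\|f\|_2$ and $y=\|f\|_2+\tfrac{1}{4\sqrt3\pi^2r^2}\|f''\|_2$, the two $\|f\|_2$-contributions recombine to $\|f\|_2$ while the $\|f''\|_2$-term is multiplied by $\theta=2/p$; the surviving prefactor is $(2r)^{(1/2)(2/p)}=(2r)^{1/p}$, giving precisely $(2r)^{1/p}\big(\|f\|_2+\tfrac{1}{2\sqrt3\pi^2pr^2}\|f''\|_2\big)$. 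The main obstacle is the constant bookkeeping: it is essential to keep the cutoff scale equal to the stated parameter $r$ and to use AM--GM (rather than Young's inequality with a fresh parameter), for it is exactly the factor $2/p$ from AM--GM that turns $\tfrac1{4\sqrt3}$ into $\tfrac1{2\sqrt3 p}$ and makes the $\|f''\|_2$-coefficient vanish as $p\to\infty$. I would also verify the endpoints separately ($p=\infty$ must eliminate the $\|f''\|_2$-term, and $p=2$ makes the interpolation an identity) and justify $\hat f\in L^1(\R)$, via $(1+\omega^2)^{-1}\in L^2$ and $(1+\omega^2)\hat f\in L^2$, so that the pointwise bound $\|f\|_\infty\le\|\hat f\|_1$ is legitimate.
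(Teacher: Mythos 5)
Your proof is correct, and it reaches the stated inequality by a genuinely different route than the paper. The analytic core is identical: both arguments split $\|\hat f\|_1$ at frequency $|\omega|=r$, using Cauchy--Schwarz and Plancherel below $r$ and $\hat f=-\wh{f''}/(4\pi^2\omega^2)$ above $r$, to obtain $\|\hat f\|_1\le\sqrt{2r}\,\bigl(\|f\|_2+\tfrac{1}{4\sqrt3\pi^2r^2}\|f''\|_2\bigr)$, and both interpolate with parameter $\theta=2/p$. The difference is where the interpolation happens. The paper fixes $f$, regards $g\mapsto fg$ as an operator $T_f$, proves the endpoint bounds $\|T_f\|_{L^\infty\to L^2}\le\|f\|_2$ and $\|T_f\|_{L^2\to L^2}\le\|\hat f\|_1$ (the latter via Young's convolution inequality together with a Schwartz-approximation argument to justify $\wh{fg}=\hat g*\hat f$), and invokes the Riesz--Thorin theorem, finishing with $(1+c)^\theta\le1+c\theta$. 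You interpolate on the function side instead: H\"older peels off $\|g\|_p$ at the cost of $\|f\|_{2p/(p-2)}$, log-convexity of the $L^s$-norms gives $\|f\|_q\le\|f\|_2^{1-2/p}\|f\|_\infty^{2/p}$, Fourier inversion gives $\|f\|_\infty\le\|\hat f\|_1$, and your weighted AM--GM step is algebraically the same as the paper's $(1+c)^\theta\le1+c\theta$ after factoring out $\|f\|_2$. Your route is more elementary: it needs neither Riesz--Thorin nor the density/convolution argument, since $fg\in L^2$ and the $p=2$ endpoint $\|fg\|_2\le\|f\|_\infty\|g\|_2$ follow at once from H\"older and inversion. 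The operator-theoretic route is more robust, transferring verbatim to situations where multiplication by $f$ is replaced by a general operator with two endpoint bounds, whereas your H\"older step exploits that $T_f$ is a multiplication operator. Your bookkeeping all checks out: $\int_{|\omega|>r}\omega^{-4}\,d\omega=\tfrac{2}{3r^3}$, the factored prefactor $(2r)^{1/p}$, the AM--GM weights converting $\tfrac{1}{4\sqrt3\pi^2r^2}$ into $\tfrac{1}{2\sqrt3\pi^2pr^2}$, and the endpoint cases $p=2$ and $p=\infty$.
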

\begin{proof}
We first observe that for $f\in H^2(\R)$ we have $\omega^2\hat f\in L^2(\R)$ and hence, for $r>0$,
\begin{align}
\begin{split}\label{e:H2L1}
\|\hat f\|_1
&= \int_{-r}^r|\hat f(\omega)|\,d\omega + \int_{|\omega|>r}(4\pi^2\omega^2)^{-1}\cdot |4\pi^2\omega^2\hat f(\omega)|\,d\omega\\
&\le \sqrt{2r}\,\|\hat f\|_2 + \left(2\int_r^\infty\frac{d\omega}{16\pi^4\omega^4}\,\right)^{1/2}\left(\int_{|\omega|>r}|4\pi^2\omega^2\hat f(\omega)|^2\,d\omega\right)^{1/2}\\
&\le \sqrt{2r}\,\|f\|_2 + \frac{\sqrt 2}{4\sqrt 3 \pi^2}\,r^{-3/2}\|f''\|_2 = \sqrt{2r}\;\,C_r(f),
\end{split}
\end{align}
where $C_r(f) := \|f\|_2 + \frac{1}{4\sqrt 3\pi^2r^2}\|f''\|_2$.
Thus, for $g\in L^2(\R)$ we obtain
\[
\|gf\|_2\,\le\,\|f\|_\infty\|g\|_2\,\le\,\|\wh f\|_1\|g\|_2,
\]
which yields the desired inequality for $p=2$. For $p=\infty$ the inequality is evident.

Now, let $p\in (2,\infty)$ and fix $f\in H^2(\R)$. Set $T_fg := fg$. Then $T_f$ is bounded both as an operator from $L^\infty(\R)$ to $L^2(\R)$ (with norm $\|f\|_2$) and from $L^2(\R)$ to $L^2(\R)$ (with norm $\le \sqrt{2r}\;C_r(f)$). By the Riesz-Thorin interpolation theorem (see, e.g., \cite[Thm.\ 6.27]{f}), $T_f$ is also bounded as an operator from $L^{2/\theta}(\R)$ to $L^2(\R)$, $\theta\in (0,1)$, with norm
\[
\|T_f\|_{L^{2/\theta}(\R)\to L^2(\R)}\,\le\,\|f\|_2^{1-\theta}\left((2r)^{1/2}C_r(f)\right)^\theta = (2r)^{\theta/2}\|f\|_2\left(1 + \frac{\|f\|_2^{-1}\|f''\|_2}{4\sqrt 3\pi^2r^2}\right)^\theta.
\]
Now, the claim follows from setting $p=2/\theta$ and the simple inequality $(1+c)^\theta\le 1+c\theta$, which holds for $\theta\in(0,1)$ and $c>0$.
\end{proof}

\begin{thm}\label{t:SL}
Let $q\in L^p(\R)$, $p\in [2,\infty)$, and set
\[
s_p := 4 - 3\sqrt{2} - 5 p + 4\sqrt{2}p + \sqrt{
 44 - 31\sqrt{2} - 88 p + 62\sqrt{2}p + 57 p^2 - 40\sqrt{2}p^2} 
\]
\[
f(s) := \sqrt{2\,\frac{(17+12\sqrt{2})s + 4 + 3\sqrt{2}}{(3 + 2\sqrt{2})s - 1 - \sqrt{2}}},
\]
as well as
\[
C_p := (1+\sqrt 2)\sqrt{3-2\sqrt 2}\,\sqrt{\tfrac{2p}{2p-1}}\,\left(\tfrac{16\cdot\sqrt 2(3+2\sqrt 2)^2}{3\pi^4 p}s_p\right)^{\frac{1}{4p-2}}.
\]
Then the singular indefinite Sturm-Liouville operator $A$ from \eqref{e:SLA} with potential $q$ is $J$-non-negative over $\ol\C\setminus K$, where
\begin{equation}\label{e:SL_incl}
K := \left\{\la\in\C : |\Im\la|\le C_pf(s_p)\|q\|_p^{\frac{2p}{2p-1}},\;\;|\Re\la|\le C_p\big(\sqrt{6+4\sqrt 2}+f(s_p)\big)\|q\|_p^{\frac{2p}{2p-1}}\right\}.
\end{equation}
\end{thm}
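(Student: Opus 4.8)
The plan is to realise $A$ from \eqref{e:SLA} as a relatively bounded perturbation $A=A_0+V$ to which Theorem~\ref{t:main} applies, and then to unfold the resulting ``bone''-shaped enclosure \eqref{e:better} into the rectangle \eqref{e:SL_incl}. Here $A_0$ is the free operator $f\mapsto-\sgn\cdot f''$ of \eqref{e:SLA_0} and $V$ is multiplication by $\sgn\cdot q$, so that $\|A_0f\|_2=\|f''\|_2$ and $\|Vf\|_2=\|qf\|_2$. First I would recall from \cite{bp} that $A_0$ is regular $J$-non-negative with $0\notin\sigma_p(A_0)$, and then determine the constant $\tau_0=\|J_0\|$ of \eqref{e:tau_0}: evaluating the strong integral \eqref{e:J0} against the explicit resolvent of the free operator gives the value $\tau_0=3+2\sqrt{2}=(1+\sqrt{2})^2$. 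I fix $\tau=\tau_0$ and record the identities $\tfrac{1+\tau}{2\tau}=2-\sqrt{2}$, $\tfrac{\tau-1}{2\tau}=\sqrt{2}-1$, and $\sqrt{6+4\sqrt{2}}=2+\sqrt{2}$, which will govern all the constants.

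Next I would manufacture the relative bound demanded by Theorem~\ref{t:main}. Lemma~\ref{l:Ls} yields, for every $r>0$, the estimate $\|Vf\|_2\le(2r)^{1/p}\bigl(\|f\|_2+\tfrac{1}{2\sqrt{3}\pi^2pr^2}\|f''\|_2\bigr)\|q\|_p$. Squaring and splitting the cross term by Young's inequality $(x+y)^2\le(1+\sigma)x^2+(1+\sigma^{-1})y^2$ with a parameter $\sigma>0$ produces an inequality of the exact shape $(1+\tau)\tau\|Vf\|^2\le 2a\|f\|^2+b\|A_0f\|^2$, with $a=a(r,\sigma)$ and $b=b(r,\sigma)$ in closed form. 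The $\sigma$-dependence decouples from that of $b$; the factor to be minimised is $(1+\sigma)^{2p}/\sigma$, whose minimiser is $\sigma=\tfrac{1}{2p-1}$, and this is the origin of the prefactor $\sqrt{\tfrac{2p}{2p-1}}$ in $C_p$ (through $1+\sigma=\tfrac{2p}{2p-1}$ and $1+\sigma^{-1}=2p$).

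I would then apply Theorem~\ref{t:main}. Since $q\in L^p$ need not be sign-definite, the lower bound $v=\essinf q$ of $JV$ may be $-\infty$, so only $\gamma=\sqrt{\tfrac{1+\tau}{2\tau}a}=\sqrt{(2-\sqrt{2})a}$ is available; arranging $b<\tfrac{\tau-1}{2\tau}=\sqrt{2}-1$ lets me invoke the sharper enclosure \eqref{e:better}, so that $A$ is $J$-non-negative over $\ol\C\setminus K$ with $K=\bigcup_{t\in[-\gamma,\gamma]}B_{\rho(t)}(t)$ and $\rho(t)=\sqrt{\tfrac{2-\sqrt{2}}{1-b}(a+bt^2)}$. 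As $\rho$ increases in $|t|$, this bone lies in the rectangle $|\Im\la|\le\rho(\gamma)$, $|\Re\la|\le\gamma+\rho(\gamma)$, where $\rho(\gamma)=\gamma\sqrt{\tfrac{1+(2-\sqrt{2})b}{1-b}}$.

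It remains to optimise $r$ and $b$. Scaling $r$ proportionally to $\|q\|_p^{p/(2p-1)}$ renders $b$ independent of $\|q\|_p$ and makes every length proportional to $\|q\|_p^{2p/(2p-1)}$, the exponent in \eqref{e:SL_incl}. Introducing $s=\tfrac{\sqrt{2}-1}{b}$ (so the admissible range $b<\sqrt{2}-1$ becomes $s>1$) one verifies the identity $(2+\sqrt{2})\rho(\gamma)/\gamma=f(s)$ with $f$ as in the statement. Minimising the non-real half-width $\rho(\gamma)$ over $b$ gives a stationarity condition that is rational in $s$; clearing denominators yields a quadratic $s^2-Bs-C=0$ with $B=(8\sqrt{2}-10)p-6\sqrt{2}+8$ and $C=10-7\sqrt{2}$, whose larger root $\tfrac{B}{2}+\sqrt{(B/2)^2+C}$ is precisely $s_p$. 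Substituting back delivers $\gamma=C_p\sqrt{6+4\sqrt{2}}\,\|q\|_p^{2p/(2p-1)}$ and $\rho(\gamma)=C_pf(s_p)\|q\|_p^{2p/(2p-1)}$, hence \eqref{e:SL_incl}. I expect the main obstacle to be exactly this last step: tracking every constant through the two substitutions so that the $p$-exponential factors (such as $(2p)^{p/(2p-1)}$) collapse into the clean form of $C_p$ via the exponent $\tfrac{1}{4p-2}$, and confirming that the stationary point indeed satisfies $s_p>1$, so that the sharper branch \eqref{e:better} is legitimate. Pinning down the exact value $\tau_0=3+2\sqrt{2}$ is the other delicate input.
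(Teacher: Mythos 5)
Your proposal follows the paper's proof in every essential respect: the same decomposition $A=A_0+V$ via Lemma \ref{l:Ls}, the same Young-type splitting (your $\sigma$ is the paper's $1/\kappa$, optimized at $\kappa=2p-1$), the same reduction to the sharper enclosure \eqref{e:better} of Theorem \ref{t:main} with $\gamma=\sqrt{\tfrac{1+\tau}{2\tau}a}$ (ignoring $v$, which is legitimate since enlarging $K$ preserves $J$-non-negativity over the complement), the same scaling $r\propto\|q\|_p^{p/(2p-1)}$ and substitution $s=\tfrac{\sqrt 2-1}{b}$, and the same optimization in $s$. Your closed-form details check out: the identity $(2+\sqrt 2)\rho(\gamma)/\gamma=f(s)$ is correct, and your quadratic $s^2-Bs-C=0$ with $B=(8\sqrt 2-10)p+8-6\sqrt 2$ and $C=10-7\sqrt 2$ does have larger root $s_p$ — I verified $(B/2)^2+C$ reproduces the radicand $44-31\sqrt 2-88p+62\sqrt 2p+57p^2-40\sqrt 2p^2$, which matches the paper's minimization of $s\mapsto f(s)^2s^{1/(2p-1)}$ on $(1,\infty)$.

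The one genuine gap is your treatment of $\tau_0$. You assert that evaluating \eqref{e:J0} against the explicit free resolvent ``gives the value $\tau_0=3+2\sqrt 2$''. This is false as stated: the paper proves only the \emph{upper bound} $\tau_0\le 3+2\sqrt 2$, and its Remark (b) shows $\tau_0\ge 5$, so the exact value is unknown. More importantly, the evaluation is not a routine computation: one must pass from the formula \eqref{e:J0_SL} (via Fubini and dominated convergence) to the double-integral expression \eqref{e:limit}, and then bound the two homogeneous Hilbert-type kernels $1/(x+y)$ and $2(x+y)/(x^2+y^2)$ with their sharp constants $\pi$ and $2\sqrt 2\,\pi$ using \cite[Thm.\ 319]{hlp}. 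This is the crux analytic step of the whole proof — it is exactly what improves the bound $\tau_0\le 9$ from \cite{bpt} to $3+2\sqrt 2$ and hence produces the stated constants — and your proposal leaves it unproved. Fortunately, Theorem \ref{t:main} only requires $\tau\ge\tau_0$, so an upper bound suffices and your architecture survives once this bound is supplied. A second, minor omission: you must also identify $A_0+V$ (with domain $H^2(\R)$) with the maximal operator $A$ on $D_{\max}$; the paper does this by combining Kato--Rellich with the $J$-self-adjointness of $A$ from \cite[Thm.\ 1.1]{bst} and the inclusion $A_0+V\subset A$.
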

\begin{proof}
The differential operator $A_0$, defined by
\[
A_0f := -\sgn\cdot f'',\qquad\dom A_0 := H^2(\R),
\]
is $J$-self-adjoint. In fact, $A_0$ is $J$-non-negative and neither $0$ nor $\infty$ is a singular critical point of $A_0$, see \cite{cn}. Also, obviously, $0\notin\sigma_p(A_0)$. 

{\it Step 1. \braces{Calculation of $\tau_0$}} We have $\tau_0 = \|J_0\|$, where (see \eqref{e:J0})
\[
J_0 = \frac 1\pi\cdot\slim_{n\to\infty}\int_{1/n}^n\Big((A_0 + it)^{-1} + (A_0-it)^{-1}\Big)\,dt,
\]
According to \cite[Proof of Thm.\ 4.2]{bpt} for $f\in L^2(\R)$ we have
\begin{equation}\label{e:J0_SL}
[J_0f,f] = \|f\|_2^2 + \frac 1{\pi}\cdot\lim_{n\to\infty}\Re\int_{1/n}^n\sqrt{\tfrac 2t}\cdot [\phi_{t},f][\phi_{t},\ol f]\,dt,
\end{equation}
where $\phi_{t}(x) = e^{(i-\sgn(x))x\cdot\sqrt{t/2}}$. Setting $f_1(x) = f(x)$ and $f_2(x) := f(-x)$, we obtain
\[
[\phi_t,f] = \int_0^\infty\big(e^{-\sqrt{\frac t2}(1-i)x}\ol{f_1(x)} - e^{-\sqrt{\frac t2}(1+i)x}\ol{f_2(x)}\big)\,dx,
\]
and thus
\[
[\phi_t,f][\phi_t,\ol f] = \sum_{j,k=1}^2(-1)^{j+k}\int_0^\infty\!\!\!\!\!\int_0^\infty e^{-z_{j,k}(x,y)\sqrt{\frac t2}}\cdot\ol{f_j(x)}f_k(y)\,dx\,dy,
\]
where $z_{jk}(x,y) = x+y + i((-1)^jx + (-1)^ky)$. Taking into account that
\[
\lim_{n\to\infty}\int_{1/n}^n\sqrt{\tfrac 2t}\cdot e^{-z\sqrt{t/2}}\,dt = \tfrac 4z
\]
for any $z\in\C$ with $\Re z > 0$, an application of Fubini's theorem and the dominated convergence theorem yields
\[
\lim_{n\to\infty}\int_{1/n}^n\sqrt{\tfrac 2t}\cdot [\phi_{t},f][\phi_{t},\ol f]\,dt = 4\sum_{j,k=1}^2(-1)^{j+k}\int_0^\infty\!\!\!\!\!\int_0^\infty\frac{\ol{f_j(x)}f_k(y)}{z_{j,k}(x,y)}\,dx\,dy.
\]
Taking real parts, we find that (see \eqref{e:J0_SL})
\[
[J_0f,f] = \|f\|_2^2 + \frac 2\pi\int\limits_0^\infty\!\!\!\int\limits_0^\infty\left[\frac{\ol{f_1(x)}f_1(y) + \ol{f_2(x)}f_2(y)}{x+y} - \frac{2(x+y)}{x^2+y^2}\Re(\ol{f_1(x)}f_2(y))\right]\,dx\,dy.
\]
For $g,h\in L^2(\R^+)$ and a measurable symmetric kernel $L : \R^+\times\R^+\to [0,\infty)$ satisfying the homogeneity condition $L(tx,ty) = t^{-1}L(x,y)$ formally define the sesquilinear form
\[
\<g,h\>_L := \int_0^\infty\int_0^\infty L(x,y)g(x)\ol{h(y)}\,dx\,dy.
\]
By \cite[Thm.\ 319]{hlp}, the form $\aproduct_L : L^2(\R^+)\times L^2(\R^+)\to\C$ is well-defined and bounded if $\|L\|_* := \int_0^\infty L(x,1)x^{-1/2}\,dx < \infty$. In this case, its bound is given by $\|L\|_*$. Here, we shall consider the kernels
\[
L_1(x,y) = \frac 1{x+y},\quad L_2(x,y) = \frac{2(x+y)}{x^2+y^2},\quad\text{and}\quad L_3 = 2L_1 + L_2.
\]
For these we have the bounds $\|L_1\|_* = \pi$, $\|L_2\|_* = 2\sqrt 2\pi$, and $\|L_3\|_* = 2(1+\sqrt 2)\pi$. Thus,
\begin{align*}
[J_0f,f]
&= \|f\|_2^2 + \tfrac 2\pi(\<f_1,f_1\>_{L_1} + \<f_2,f_2\>_{L_1} - \Re\,\<f_1,f_2\>_{L_2})\\
&\le\|f\|_2^2 + 2\big(\|f_1\|_{L^2(\R^+)}^2 + \|f_2\|_{L^2(\R^+)}^2 + 2\sqrt 2\|f_1\|_{L^2(\R^+)}\|f_2\|_{L^2(\R^+)}\big)\,\le\,(3+2\sqrt 2)\|f\|_2^2,
\end{align*}
and therefore
\[
\tau_0 = \|JJ_0\| = \sup\{(JJ_0f,f)_2 : \|f\|_2=1\} = \sup\{[J_0f,f] : \|f\|_2=1\}\,\le\,3+2\sqrt 2.
\]
On the other hand, if we choose functions $f\in L^2(\R)$ with $f_2 = -f_1$, we obtain
\[
[J_0f,f] = \|f\|_2^2 + \frac 2\pi\int_0^\infty\int_0^\infty\left(\frac 2{x+y} + \frac{2(x+y)}{x^2+y^2}\right)\ol{f_1(x)}f_1(y)\,dx\,dy = \|f\|_2^2 + \tfrac 2\pi\<f_1,f_1\>_{L_3}.
\]
For $\veps>0$, choosing a function $f_1\in L^2(\R^+)$, $f_1\ge 0$, with $\|f_1\|_{L^2(\R^+)}^2 = \tfrac 12$ (i.e., $\|f\|_2=1$) and $\<f_1,f_1\>_{L_3}\ge \tfrac 12(\|L_3\|_*-\pi\veps)$ leads to $[J_0f,f]\ge 3+2\sqrt 2 - \veps$, showing that
\[
\tau_0 = 3 + 2\sqrt 2.
\]

{\it Step 2. \braces{Estimation of the exceptional region $K$}} Let the operator $V$ be defined by
\[
Vf := \sgn\cdot q\cdot f,\qquad\dom V = \{f\in L^2(\R) : qf\in L^2(\R)\}.
\]
By Lemma \ref{l:Ls} we have $\dom A_0\subset\dom V$ and, for any $r > 0$ and $f\in\dom A_0$,
\[
\|Vf\|_2\,\le\,(2r)^{1/p}\,\left(\|f\|_2 + \frac{1}{2\sqrt 3\pi^2pr^2}\|A_0f\|_2\right)\|q\|_p.
\]
This implies that for all $r,\kappa > 0$ and all $f\in\dom A_0$ we have
\begin{equation}\label{e:V}
\|Vf\|_2^2\,\le\,\|q\|_p^24^{\frac 1p}(1+\tfrac 1\kappa)r^{\frac 2p}\|f\|_2^2 + \frac{(1+\kappa)4^{\frac 1p}\|q\|_p^2}{12\,\pi^4p^2}r^{\frac 2p-4}\,\|A_0f\|_2^2.
\end{equation}
In particular, $A_0+V$ is $J$-self-adjoint by the Kato-Rellich theorem. By \cite[Thm.\ 1.1]{bst} the same is true for the operator $A$. And since $\dom(A_0+V) = H^2(\R)\subset D_{\max} = \dom A$, it follows that $A_0+V\subset A$ and thus $A_0+V = A$.

From \eqref{e:V} we get that with $\tau := \tau_0 = 3+2\sqrt 2$ we have
\begin{align*}
(1+\tau)\tau\,\|Vf\|_2^2\,\le\,2\alpha_{r,\kappa}\|f\|_2^2 + \beta_{r,\kappa}\|A_0f\|_2^2,\qquad f\in\dom A_0,
\end{align*}
where
\[
\alpha_{r,\kappa} = \tfrac{(1+\tau)\tau}2\|q\|_p^24^{\frac 1p}(1+\tfrac 1\kappa)r^{\frac 2p}
\qquad\text{and}\qquad
\beta_{r,\kappa} = \frac{(1+\tau)\tau(1+\kappa)4^{\frac 1p}\|q\|_p^2}{12\,\pi^4p^2}\,r^{\frac 2p-4}.
\]
We have $\beta_{r,\kappa} < \tfrac{\tau-1}{2\tau}$ if and only if $r > r_\kappa := (\tfrac{\tau^2(1+\tau)(1+\kappa)4^{1/p}}{6(\tau-1)\pi^4p^2}\|q\|_p^2)^{1/(4-2/p)}$. Therefore, for $s>1$ we set $r_{\kappa,s} := (\tfrac{\tau^2(1+\tau)(1+\kappa)4^{1/p}}{6(\tau-1)\pi^4p^2}\|q\|_p^2s)^{1/(4-2/p)}$ as well as
\[
a_{\kappa,s} := \alpha_{r_{\kappa,s},\kappa} = \tfrac{(1+\tau)\tau}{2}\left(\tfrac{8\tau^2(1+\tau)}{3(\tau-1)\pi^4p^2}\right)^{\frac 1{2p-1}}\|q\|_p^{\frac{4p}{2p-1}}(1+\tfrac 1\kappa)(1+\kappa)^{\frac 1{2p-1}}s^{\frac 1{2p-1}},
\]
\[
b_s := \beta_{r_{\kappa,s},\kappa} = \tfrac{\tau-1}{2\tau\cdot s}
\qquad\text{and}\qquad
\gamma_{\kappa,s} := \sqrt{\tfrac{1+\tau}{2\tau}a_{\kappa,s}}.
\]
The minimum of $(1+\tfrac 1\kappa)(1+\kappa)^{\frac 1{2p-1}}$ is attained at $\kappa = 2p-1$ and equals $\tfrac{2p}{2p-1}(2p)^{1/(2p-1)}$. We choose this $\kappa$ and obtain $a_s = M_ps^{1/(2p-1)}$, where
\[
M_p := \tfrac{(1+\tau)\tau}{2}\left(\tfrac{16\tau^2(1+\tau)}{3(\tau-1)\pi^4p}\right)^{\frac 1{2p-1}}\tfrac{2p}{2p-1}\|q\|_p^{\frac{4p}{2p-1}} = \tfrac{4\tau^2}{1+\tau}\,s_p^{-\frac{1}{2p-1}}\|q\|_p^{\frac{4p}{2p-1}}C_p^2,
\]
as well as $\gamma_s = \sqrt{\tfrac{1+\tau}{2\tau}a_s}$. Now, the second part of Theorem \ref{t:main} implies that for each $s>1$ the operator $A = A_0 + V$ is $J$-non-negative over $\ol\C\setminus K_{s}$, where
\[
K_{s} := \bigcup_{t\in[-\gamma_{s},\gamma_{s}]}B_{\sqrt{\frac{1+\tau}{2\tau(1-b_s)}(a_s+b_st^2)}}(t).
\]
For any $\la\in K_s$ we have
\[
|\Im\la|^2\le\tfrac{1+\tau}{2\tau(1-b_s)}(a_s+b_s\gamma_s^2) = \tfrac{1+\tau}{2\tau-\frac{\tau-1}{s}}\left(1+\tfrac{\tau^2-1}{4\tau^2s}\right)a_s = \tfrac{1+\tau}{4\tau^2}M_pf(s)^2s^{\frac 1{2p-1}}.
\]
The minimum of $s\mapsto f(s)^2s^{1/(2p-1)}$ on $(1,\infty)$ is attained at $s_p$. Choosing $s = s_p$, we find that $A$ is $J$-non-negative over $\ol\C\setminus K'$, where $K' := K_{s_p}$, and we have just proved the claimed bound on $|\Im\la|$ for $\la\in K'$. Furthermore, for $\la\in K'$ we have
\begin{align*}
|\Re\la|
&\le\gamma_{s_p} + \sqrt{\tfrac{(1+\tau)(a_{s_p}+b_{s_p}\gamma_{s_p}^2)}{2\tau(1-b_{s_p})}} = \sqrt{\tfrac{1+\tau}{2\tau}}\sqrt{\tfrac{4\tau^2}{1+\tau}\|q\|_p^{\frac{4p}{2p-1}}C_p^2} + C_pf(s_p)\|q\|_p^{\frac{2p}{2p-1}}\\
&= C_p\big(\sqrt{2\tau}+f(s_p)\big)\|q\|_p^{\frac{2p}{2p-1}},
\end{align*}
and the theorem is proved.
\end{proof}

\begin{rem}
(a) The bound on the real part in \eqref{e:SL_incl} can be further slightly improved by minimizing the expression $\gamma_s + \sqrt{\frac{1+\tau}{2\tau(1-b_s)}(a_s+b_s\gamma_s^2)}$, where $\tau = 3+2\sqrt 2$.

(b) In \cite{bpt} it was proved that $\tau_0\le 9$. In the proof of Theorem \ref{t:SL} we have now shown that $\tau_0 = 3+2\sqrt 2$.
%

(c) Estimates on the non-real spectrum of singular indefinite Sturm-Liouville operators have been obtained in \cite{bst} for various weights and potentials. In the case of the signum function as weight and a negative potential $q\in L^p(\R)$ the enclosure in \cite[Cor.\ 2.7 (ii)]{bst} for the non-real eigenvalues $\la$ of $A$ reads as follows:
\begin{equation}\label{e:bst}
|\Im\la|\le 2^{\frac{2p+1}{2p-1}}\cdot 3\cdot\sqrt{3}\,\|q\|_p^{\frac{2p}{2p-1}}
\quad\text{and}\quad
|\la|\le\left(2^{\frac{2p+1}{2p-1}}\cdot 3\cdot\sqrt{3} + 2^{\frac{3-2p}{2p-1}}\cdot 9\right)\,\|q\|_p^{\frac{2p}{2p-1}}.
\end{equation}
A direct comparison shows that the enclosure for the non-real spectrum of $A$ in Theorem \ref{t:SL} is strictly better in the sense that the region $K$ in \eqref{e:SL_incl} is properly contained in that described by \eqref{e:bst} (see Figure \ref{f:comp_bst}). This is remarkable inasmuch as our bound \eqref{e:SL_incl} was mainly obtained by applying the abstract perturbation result Theorem \ref{t:main}, whereas in \cite{bst} the authors work directly with the differential expressions. It is also noteworthy that the estimates in \cite[Cor.\ 2.7 (ii)]{bst} are of the same form $C\|q\|_p^{2p/(2p-1)}$ as in \eqref{e:SL_incl}.

\begin{figure}[ht]
\begin{center}
\includegraphics[scale=.4]{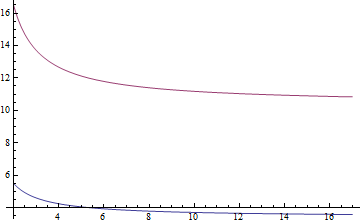}
\hspace{2cm}
\includegraphics[scale=.4]{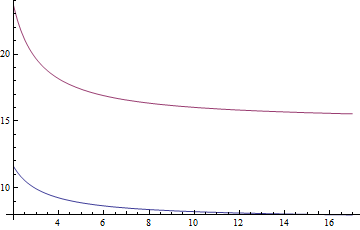}
\end{center}
\caption{Comparison of the enclosures in \eqref{e:SL_incl} (blue) and \eqref{e:bst} (red). The left figure depicts the bounds on the imaginary part in dependence of $p$, respectively, whereas the right figure compares the bound on $|\la|$ in \eqref{e:bst} and half of the diagonal of the rectangle $K$ in \eqref{e:SL_incl}. The limits of the blue curves as $p\to\infty$ are $2+\sqrt 2\approx 3.41$ and $\sqrt{30+20\sqrt 2}\approx 7.63$, resp., whereas the limits of the red curves are given by $6\sqrt 3\approx 10.39$ and $6\sqrt 3 + 4.5\approx 14.89$, resp.}\label{f:comp_bst}
\end{figure}

(d) Recently, the bound on the imaginary part of the eigenvalues of $A$ could be further significantly improved in \cite{ci} by using a Birman-Schwinger type principle. However, the bounding region in \cite{ci} is not compact. To be precise, it was shown that each eigenvalue $\la$ of $A$ in \eqref{e:SLA} satisfies $2^{\frac 3{2p}-1}|\la|^{\frac 1p}|\Im\la|^{1-\frac 1p}\,\le\,\big(|\la|+|\Re\la|\big)^{\frac 1{2p}}\|q\|_p$, which implies
\[
|\Im\la|\,\le\,2\left(\frac{3\sqrt 3}{16}\right)^{\frac 1{2p-1}}\|q\|_p^{\frac{2p}{2p-1}}.
\]
\end{rem}

\section*{Acknowledgments}
The author thanks Jussi Behrndt, Christian G\'erard, David Krej\v{c}i\v{r}\'ik, Ilya Krishtal, Christiane Tretter, and Carsten Trunk for valuable discussions and hints.

\section*{Author Affiliation}
\end{document}